\documentclass[hidelinks,onefignum,onetabnum]{siamart251104}

\usepackage{lipsum}
\usepackage{amsfonts}
\usepackage{graphicx}
\usepackage{epstopdf}
\ifpdf
  \DeclareGraphicsExtensions{.eps,.pdf,.png,.jpg}
\else
  \DeclareGraphicsExtensions{.eps}
\fi

\newsiamremark{remark}{Remark}
\newsiamremark{hypothesis}{Hypothesis}
\crefname{hypothesis}{Hypothesis}{Hypotheses}
\newsiamthm{claim}{Claim}
\newsiamremark{fact}{Fact}
\crefname{fact}{Fact}{Facts}

\headers{The S\MakeLowercase{i}MPL Method for Multi-Material Topology Optimization}{P. Gangl, B. Keith, D. Kim, B. S. Lazarov, and T. M. Surowiec}

\title{%
  The S\MakeLowercase{i}MPL Method for Density-Based Multi-Material Topology Optimization\thanks{Uploaded on arXiv on \today.
    \funding{DK and BK were supported in part by the U.S.\ Department of Energy, Office of Science Early Career Research Program under Award Number DE-SC0024335 and by the Center for Information Geometric Mechanics and Optimization (CIGMO), a PSAAP-IV Focused Investigatory Center funded by the U.S.\ Department of Energy, National Nuclear Security Administration under Award Number DE-NA0004261.
      BK was also supported in part by the Alfred P.\ Sloan Foundation via a Sloan Research Fellowship in Mathematics.
      The research of TS was supported by the Research Council of Norway, project number 357556 (LaVa).
      The work of PG is partially supported by the State of Upper Austria and by the joint DFG/FWF Collaborative Research Centre CREATOR (DFG: Project-ID 492661287/TRR 361; FWF: Grant-DOI 10.55776/F90) at TU Darmstadt, TU Graz, RICAM and JKU Linz. Part of this work was performed under the auspices of the U.S. Department of Energy by Lawrence Livermore National Laboratory under Contract DE-AC52-07NA27344 and the LLNL-LDRD Program under Project tracking No. 25-ERD-030. Release number LLNL-JRNL-2018905.     }%
  }%
}

\author{
  Peter Gangl\thanks{Johann Radon Institute for Computational and Applied Mathematics, Austrian Academy of Sciences (\email{peter.gangl@ricam.oeaw.ac.at}).}
  \and Brendan Keith\thanks{Division of Applied Mathematics, Brown University, Providence, RI (\email{brendan\_keith@brown.edu}, \email{dohyun\_kim@brown.edu}).}
  \and Dohyun Kim\footnotemark[3]
  \and Boyan S. Lazarov\thanks{Lawrence Livermore National Laboratory (\email{lazarov2@llnl.gov}).}
  \and Thomas M. Surowiec\thanks{Simula Research Laboratory (\email{thomasms@simula.no}).}}

\usepackage{amsopn}

\usepackage{overpic}
\usepackage{graphicx}%
\usepackage{multirow}%
\usepackage{amsmath,amssymb,amsfonts}%
\usepackage{mathrsfs}%
\usepackage[title]{appendix}%
\usepackage{xcolor}%
\usepackage{textcomp}%
\usepackage{manyfoot}%
\usepackage{booktabs}%
\usepackage{algorithm}%
\usepackage{algorithmicx}%
\usepackage{algpseudocode}%
\usepackage{listings}%
\usepackage{bm}
\usepackage{cleveref}

\newcommand{\subto}{\mathop{\mathrm{subject\ to\ }}}
\DeclareMathOperator*{\argmin}{arg\,min}
\DeclareMathOperator{\dd}{d\!}

\usepackage{tcolorbox}

\usepackage{subfig}
\usepackage{tikzscale}
\usepackage{pgfplots}
\pgfplotsset{width=7cm,compat=1.14}
\usetikzlibrary{pgfplots.groupplots}{}

\definecolor{color0}{rgb}{0.7843, 0.7843, 0.7843}
\definecolor{color1}{rgb}{0, 0.4470, 0.7410}
\definecolor{color2}{rgb}{0.8500, 0.3250, 0.0980} \definecolor{color3}{rgb}{0.9290, 0.6940, 0.1250}
\definecolor{color4}{rgb}{0.7060, 0.3840, 0.7650}
\definecolor{color5}{rgb}{0.4660, 0.6740, 0.1880}
\definecolor{color6}{rgb}{0.3010, 0.7450, 0.9330}
\definecolor{color7}{rgb}{0.6350, 0.0780, 0.1840}
\definecolor{color8}{rgb}{0.0, 0.4078, 0.3412}

\newsiamremark{assumption}{Assumption}

\allowdisplaybreaks

\ifpdf
\hypersetup{
  pdftitle={SiMPL method for density-based multi-material topology optimization},
  pdfauthor={P. Gangl, B. Keith, D. Kim, B. S. Lazarov, and T. M. Surowiec}
}
\fi
\begin{document}

\maketitle

% REQUIRED
\begin{abstract}
  We introduce an efficient and scalable method for density-based multi-material topology optimization, integrating classical mirror descent techniques with point-wise polytopal design constraints. Such constraints arise naturally in this class of problems, wherein the vertices of convex polytopes correspond to distinct design states, only one of which should be occupied at each point in space. The framework generates a descending sequence of iterates by penalizing the design space around the previous iterate with a generalized distance function tailored to the convex geometry of the $n$-dimensional polytope. This distance function, called a Bregman divergence, smooths the optimization landscape, ensuring that each iterate strictly satisfies the point-wise constraints. Subsequently, global constraints (e.g., bounds on the structural mass) can be enforced easily by solving a small, finite-dimensional dual problem. The resulting method is simple to implement and demonstrates robustness and efficiency when combined with an Armijo-type line search algorithm. We validate the method in structural design problems involving the optimal arrangement of both isotropic and anisotropic materials, as well as magnetic flux optimization in electric motors.
\end{abstract}
\begin{keywords}
  Multi-Material Topology Optimization, Mirror Descent, Numerical Optimization, Line Search Algorithm, Engineering Design
\end{keywords}

% REQUIRED
\begin{MSCcodes}
  74P05, 74P15, 90C30, 65K10
\end{MSCcodes}
%%\pacs[JEL Classification]{D8, H51}

%%\pacs[MSC Classification]{35A01, 65L10, 65L12, 65L20, 65L70}

\maketitle

%!TEX root = ./main.tex
\section{Introduction}

A wide range of engineering design problems involves finding the optimal arrangement of multiple materials. Examples can be found in \cite{BendsoeSigmund2004}, which investigates the optimal placement of materials with different stiffnesses in structural compliance minimization, and \cite{Cherriere}, which considers the distribution of current phases in electric motor design, among many other recent studies. Although incorporating multiple materials introduces additional modeling and computational challenges, most established approaches for classical (single-material) topology optimization have been extended to this setting, including level-set methods based on shape sensitivities \cite{AllaireDapognyEtAl2014, Laurain2025189229} or topological derivatives \cite{Gangl2020, NodaYamada2022}, phase-field formulations \cite{Adam2018, Adam2019, Blank2025}, and traditional material interpolation schemes \cite{BendsoeSigmund2004}.

Level-set methods offer the advantage of producing crisp material distributions without intermediate states, but density-based approaches remain the predominant choice in industrial applications because of their ease of implementation and compatibility with standard optimization software. In multi-material density-based topology optimization, one typically introduces a polygonal or polyhedral interpolation domain whose vertices correspond to distinct materials. A straightforward construction uses orthogonal domains such as unit squares or cubes, yet this restricts the admissible number of materials to powers of two. Attempts to bypass this limitation by duplicating materials can introduce undesirable biases in the optimized design.
Embedding material states into a higher-dimensional simplex alleviates this issue \cite{Adam2018, Blank2025}, but in some applications it is advantageous to encode physical intuition directly into the geometry of the domain, leading to a general (convex) polygonal or polyhedral interpolation domains \cite{Cherriere, CherriereGanglKrenn2025}. As observed in \cite{Cherriere}, a central difficulty for density-based multi-material optimization over these domains is the complexity of the associated orthogonal projection. While efficient projection algorithms exist for simplices of arbitrary dimension, projection onto general high-dimensional convex polytopes is more computationally involved.
Moreover, such projections are mainly suited to lowest-order finite element discretizations of the density field, as they need only be applied at element midpoints or nodes to enforce the design constraints.
This inhibits the development of higher-order discretizations for topology optimization, which could have superior stability properties; cf.\ \cite[Experiment~2]{simpl-math}.

To address these challenges, we introduce the Simplicial Mirror descent with a Projected Latent variable (SiMPL) method for multi-material topology optimization, which generalizes the Sigmoidal Mirror descent with a Projected Latent variable method \cite{simpl-engrg, simpl-math}, originally developed for single-material problems. The SiMPL method replaces explicit pointwise projections onto the design space with a mirror map induced by a Legendre function whose effective domain matches the target $n$-dimensional convex polytope. In contrast to the original, single-material SiMPL method, which employs the Fermi–Dirac binary entropy to enforce pointwise bound constraints, we construct a new Legendre function based on the Gibbs entropy for simplices, but tailored to general convex polytopes. This allows the design to evolve freely through an \textit{unconstrained} latent variable $\bm{\psi}$, while the design variable $\bm{\eta} = \bm{\eta}(\bm{\psi})$ automatically satisfies the problem's pointwise polytopal design constraints. The resulting algorithm consists of two stages: an {unconstrained} mirror-descent update on the latent variable, followed by a projection that enforces global constraints such as bounds on the total mass. The latter reduces to a finite-dimensional dual problem whose size equals the number of global constraints. The proposed method preserves the main advantages of its single-material counterpart \cite{simpl-engrg, simpl-math}, namely conceptual simplicity, ease of implementation, and fast convergence with standard line-search strategies requiring minimal parameter tuning.

\section{Multi-material Topology Optimization}
We begin by stating an abstract topology optimization problem over a design domain $\Omega \subset \mathbb{R}^d$. Hereafter, we let $\bm{u}$ denote the state variable, which implicitly depends on the design variable $\bm{\eta}\in L^\infty(\Omega;\mathbb{R}^n)$ via the abstract equation
\begin{equation*}
  \mathcal{L}(\bm{\eta},\bm{u})=0.
\end{equation*}
Our objective is to minimize a functional $F: L^\infty(\Omega; \mathbb{R}^n) \to \mathbb{R}$, which incorporates the state variable $\bm{u} = \bm{u}(\bm{\eta})$, subject to both local and global constraints on the design variable.
To this end, let $0 < M_0 < \infty$ and $0 < M_i \leq \infty$ for each $i=1,...,n$.
The density-based multi-material topology optimization problem reads as follows:
\begin{subequations}\label{eq:prob-min}
  \begin{align}
    \label{eq:min-obj}
    \min_{\bm{\eta}\in L^\infty(\Omega;\mathbb{R}^n)}
     &
    F(\bm{\eta})                                                        \\
    \subto
    \label{eq:min-sum1}
     & \sum_{i=1}^n\eta_i=1,                                            \\
    \label{eq:min-pos}
     & \eta_i\geq 0\text{ for all }i=1,...,n,                           \\
    \label{eq:min-mass-all}
     & \sum_{i=1}^n\int_\Omega \rho_i\eta_i\dd x\leq M_0,               \\
    \label{eq:min-mass-ind}
     & \int_\Omega \rho_i\eta_i\dd x\leq M_i\text{ for each }i=1,...,n.
  \end{align}
\end{subequations}
In this set-up, $\bm{\eta}$ plays the role of an indicator field with $\bm{\eta}(x)=\bm{e}_i$ implying that $x$ is fully occupied by the $i$-th material.
In particular, the local (pointwise) constraints \eqref{eq:min-sum1} and \eqref{eq:min-pos} together enforce a volume-filling (partition of unity) property on the design variable $\bm{\eta}$, while the global constraints \eqref{eq:min-mass-all} and \eqref{eq:min-mass-ind} enforce total and individual mass requirements for the material states, respectively.

\subsection{Polytopal constraints}

Note that the constraints \eqref{eq:min-sum1}--\eqref{eq:min-pos} ensure that $\bm{\eta}(x)$ is contained in the standard $(n-1)$-simplex at every point $x$ in $\Omega$.
More generally, for a convex polytope $P \subset \mathbb R^n$, a matrix $W \in \mathbb R^{r \times n}$, and a vector $\bm{b} \in \mathbb R^r$, we may consider
\begin{subequations}\label{eq:admin}
  \begin{align}
    \mathcal{K} & :=\{\bm{\eta}\in L^\infty(\Omega;\mathbb{R}^n)\mid\bm{\eta}(x)\in P\text{ a.e.\ }x\in\Omega\},\label{eq:admin_a}       \\
    \mathcal{M} & :=\{\bm{\eta}\in L^\infty(\Omega;\mathbb{R}^n)\mid\int_\Omega W\bm{\eta}\dd x\preccurlyeq \bm{b}\}, \label{eq:admin_b}
  \end{align}
\end{subequations}
where $\bm{a}\preccurlyeq\bm{b}$ (resp.\ $\bm{a}\prec\bm{b}$) denotes the element-wise inequality, meaning $a_i\leq b_i$ (resp.\ $a_i<b_i$) for each index $i$.
In this more general setting, $\mathcal{K}$ encodes the pointwise polytopal constraint, while $\mathcal{M}$ indicates $r$ global linear constraints.
Likewise, the feasible set $\mathcal{A}$ is the intersection of these two sets, $\mathcal{A}=\mathcal{K}\cap \mathcal{M}$.
In this case, we also define subsets of $\mathcal{K}$ and $\mathcal{A}$ consisting of the member functions that are uniformly bounded away from $\partial P$; namely,
\begin{equation}\label{eq:pointwise-int}
  \mathring{\mathcal{K}}:=\Big\{\bm{\eta}\in \mathcal{K}\mid \operatorname*{ess\,inf}_{x\in\Omega}\big(\operatorname{dist}(\bm{\eta}(x),\partial P)\big)>0\Big\},\qquad\mathring{\mathcal{A}}:=\mathring{\mathcal{K}}\cap\mathcal{M},
\end{equation}
where $\operatorname{dist}(\bm{p},\partial P):=\inf_{\bm{y}\in \partial P}|\bm{p}-\bm{y}|$ is the Euclidean distance from the point $\bm{p}$ to the boundary $\partial P$.
As such, each member of $\mathring{\mathcal{K}}$ and $\mathring{\mathcal{A}}$ is uniformly bounded away from $\partial P$.

In general, we consider multi-material topology optimization problems of the form~
\begin{equation}
  \min_{\bm{\eta}\in \mathcal{A}}F(\bm{\eta}).
  \label{eq:GeneralMMTO}
\end{equation}
Using the above notation, \eqref{eq:prob-min} can be seen as special case of~\eqref{eq:GeneralMMTO} by setting
\begin{gather*}
  P:=\Delta^{n-1}=\big\{\bm{\eta}\in \mathbb{R}^n:\sum_{i=1}^n\eta_i=1,\;\eta_i\geq 0\text{ for all }i=1,...,n\big\},\\
  W:=\begin{bmatrix}\bm{\rho}^\top\\\operatorname{diag}(\bm{\rho})\end{bmatrix},\qquad \bm{b}:=\begin{bmatrix}M_0\\\vdots\\M_n\end{bmatrix}.
\end{gather*}

\subsection{Differentiability}

In density-based topology optimization, auxiliary fields, such as filtered densities, are typically introduced to ensure well-posedness and manufacturability~\cite{lazarov2011-filter,bendsoe1988,Amir2012-robust}. For brevity, we subsume these operations within the definition of the objective functional $F$. Thus, while the Fr\'echet derivative $F'(\bm{\eta})$ generally resides in the dual space $(L^\infty(\Omega; \mathbb{R}^n))'$, many topology optimization problems possess enough structure to admit a primal representation of the derivative in $L^\infty(\Omega; \mathbb{R}^n)$ \cite{simpl-math}. We formalize this property via the following assumption.
\begin{assumption}[Differentiability]\label{asum:grad}
  The objective $F$ is continuously Fr\'echet differentiable on $\mathcal{A}$. In addition, its derivative admits a primal representation $\nabla F \in L^\infty(\Omega;\mathbb{R}^n)$ characterized by the variational equation
  \begin{equation*}
    \int_\Omega \nabla F(\bm{\eta}) \cdot \bm{q} \dd x = \langle F'(\bm{\eta}), \bm{q} \rangle \quad\text{for all }\bm{q} \in L^\infty(\Omega;\mathbb{R}^n).
  \end{equation*}
\end{assumption}

\subsection{Notation}
\label{sub:Notation}
Except in \Cref{lem:parametrization}, below, we perform the minor crime of using the same symbol to denote a function with arguments in $\mathbb{R}^n$ and its superposition operator, which takes arguments in $L^\infty(\Omega;\mathbb{R}^n)$. Throughout, $|\bm{y}|$ denotes the Euclidean norm of $\bm{y}\in\mathbb{R}^n$. Bold symbols denote vectors or vector-valued functions, and a plain symbol with a subscript denotes the corresponding component, e.g., $y_i$ denotes the $i$-th component of $\bm{y}$.

\section{Bregman Divergence and Mirror Descent}
In this section, we motivate the general optimization framework that leads to the SiMPL method, as presented in \Cref{alg:pmd-gbb}, below.
To maintain focus on the conceptual transition, we defer certain technical details to subsequent sections and proceed with a formal derivation of the algorithm's main features.

\subsection{Mirror Descent Method}
Consider a first-order approximation of the objective function $F$ around the previous iterate $\bm{\eta}^k$:
\begin{equation}\label{eq:first-order-approx}
  J(\bm{\eta};\bm{\eta}^k)=F(\bm{\eta}^k)+\int_\Omega \nabla F(\bm{\eta}^k)\cdot(\bm{\eta}-\bm{\eta}^k)\dd x.
\end{equation}
The well-known \textit{projected steepest descent method} solves an $L^2$-regularized subproblem based on this first-order approximation.
We note that \cref{asum:grad} implies that $F$ and $\nabla F$ are well-defined for all $L^p(\Omega;\mathbb{R}^n)$ with $1\leq p\leq \infty$.
By embedding the problem in the $L^2$-topology, we consider:
\begin{equation*}
  \min_{\bm{\eta} \in \mathcal{A}} J(\bm{\eta};\bm{\eta}^k) + \frac{1}{2\alpha_k} \|\bm{\eta} - \bm{\eta}^k\|_{L^2(\Omega)}^2.
\end{equation*}
The unique solution to this constrained minimization problem is characterized by
\begin{equation}
  \label{eq:PGD}
  \bm{\eta}^{k+1} = \operatorname{Proj}_{ L^2} \Big( \bm{\eta}^k - \alpha_k \nabla F(\bm{\eta}^k) \Big),
\end{equation}
where $\operatorname{Proj}_{ L^2}$ denotes the canonical $L^2$-projection onto the admissible set $\mathcal{A}$.

Mirror descent methods, on the other hand, replace the $L^2$-regularization term with a more general distance-like function:
\begin{equation*}
  \min_{\bm{\eta}\in\mathcal{A}}J(\bm{\eta};\bm{\eta}^k)+\frac{1}{\alpha_k}\int_\Omega \mathcal{D}_R(\bm{\eta},\bm{\eta}^k)\dd x.
\end{equation*}
Here, $\mathcal{D}_R$ is the Bregman divergence generated by a strictly convex function $R:\mathbb{R}^n\rightarrow \mathbb{R}\cup \{+\infty\}$,
\begin{equation*}
  \mathcal{D}_R(\bm{\eta},\bm{\eta}^k)=R(\bm{\eta})-R(\bm{\eta}^k)-\nabla R(\bm{\eta}^k)\cdot(\bm{\eta}-\bm{\eta}_k).
\end{equation*}
When $R(\bm{\eta})=|\bm{\eta}|^2/2$, we recover the projected steepest descent method~\eqref{eq:PGD}.
When $R$ is only subdifferentiable, then the gradient $\nabla R(\bm{\eta}^k)$ can be replaced with any subgradient of $R$ at $\bm{\eta}^k$.
Similar to the steepest descent method, we can express the next iterate by
\begin{equation}
  \label{eq:md-full-formula}
  \bm{\eta}^{k+1}=\operatorname{Proj}_{R}\Big((\nabla R^*)\big(\nabla R(\bm{\eta}^k)-\alpha_k\nabla F(\bm{\eta}^k)\big)\Big).
\end{equation}
Here, $\nabla R^*$ is the gradient of the Fenchel conjugate of $R$ and $\operatorname{Proj}_{R}:\mathring{\mathcal{K}}\rightarrow \mathcal{A}$ is the Bregman projection
\begin{equation*}
  \operatorname{Proj}_{R}(\bm{q}):=\argmin_{\bm{\eta}\in\mathcal{A}}\int_\Omega \mathcal{D}_R(\bm{\eta},\bm{q})\dd x.
\end{equation*}

\begin{algorithm}[t]
  \caption{The SiMPL Method for Multi-Material Topology Optimization}\label{alg:pmd-gbb}
  \algrenewcommand\algorithmicindent{0.75em}%
  \begin{algorithmic}[1]
    \Require initial design $\bm{\eta}^0\in\mathring{\mathcal{K}}$, exit tolerance $\mathtt{tol} > 0$, and $c_1>0$.
    % \State $k \gets -1$
    \State $\bm{\psi}^0 \gets \nabla R(\bm{\eta}^0)$
    \For{$k=0,1,2,\ldots$}
    \State Compute $\displaystyle\alpha_k > 0$ \Comment{Eq.~\eqref{eq:gbb}}
    \While{\texttt{true}}
    \State$\bm{\psi}^{k+1/2} \gets \bm{\psi}^k - \alpha_k \nabla F(\bm\eta^k)$
    \State$\bm{\psi}^{k+1}=\bm{\psi}^{k+1/2}-W^T\bm{\mu}^{k+1}$ \Comment{Eq.~\eqref{eq:dual-problem}}
    \State$\bm{\eta}^{k+1} \gets \nabla R^*(\bm{\psi}^{k+1})$ \Comment{Eq.~\eqref{eq:polymap}}
    \If{the Armijo condition \eqref{eq:armijo} is satisfied}
    \State \textbf{break}
    \EndIf
    \State $\alpha_k \gets \alpha_k/2$
    \EndWhile
    \If{$\texttt{res}_{k} < \mathtt{tol}$} \Comment{Eq.~\eqref{eq:KKT_estimator}}
    \State \textbf{break}
    \EndIf
    \EndFor

    \State \Return $\bm{\eta}^{k+1},\;F(\bm{\eta}^{k+1})$
  \end{algorithmic}
\end{algorithm}

\begin{figure*}
  \centering
  \begin{tabular}{cc}
    \includegraphics[width=0.35\textwidth]{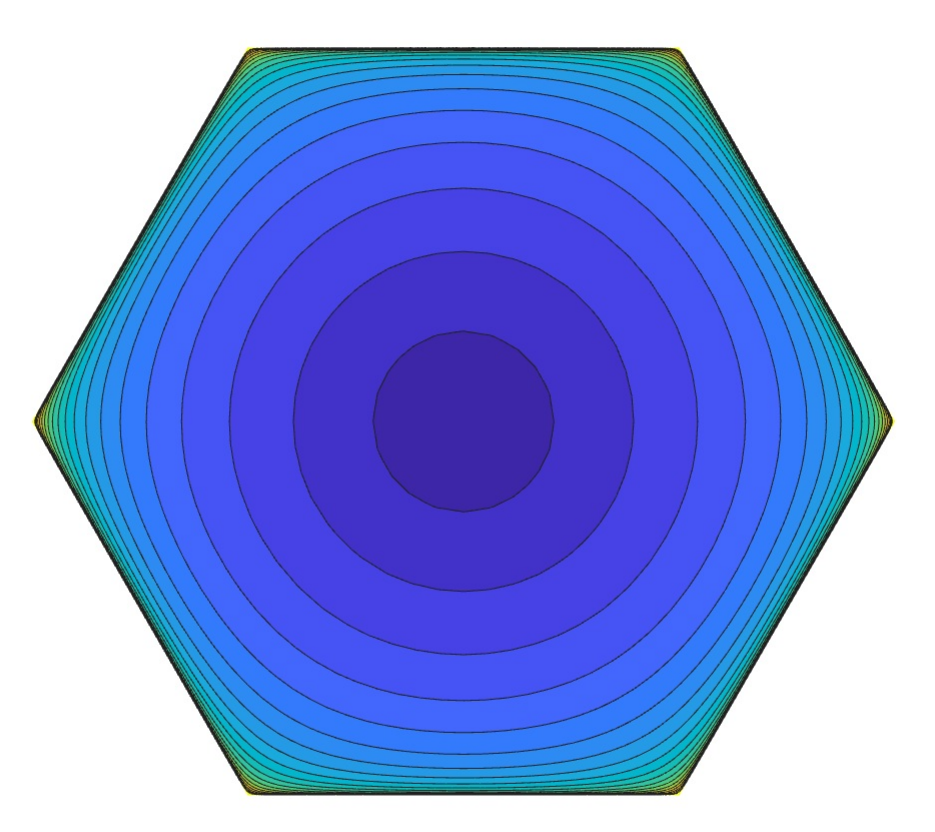} &
    \includegraphics[width=0.42\textwidth]{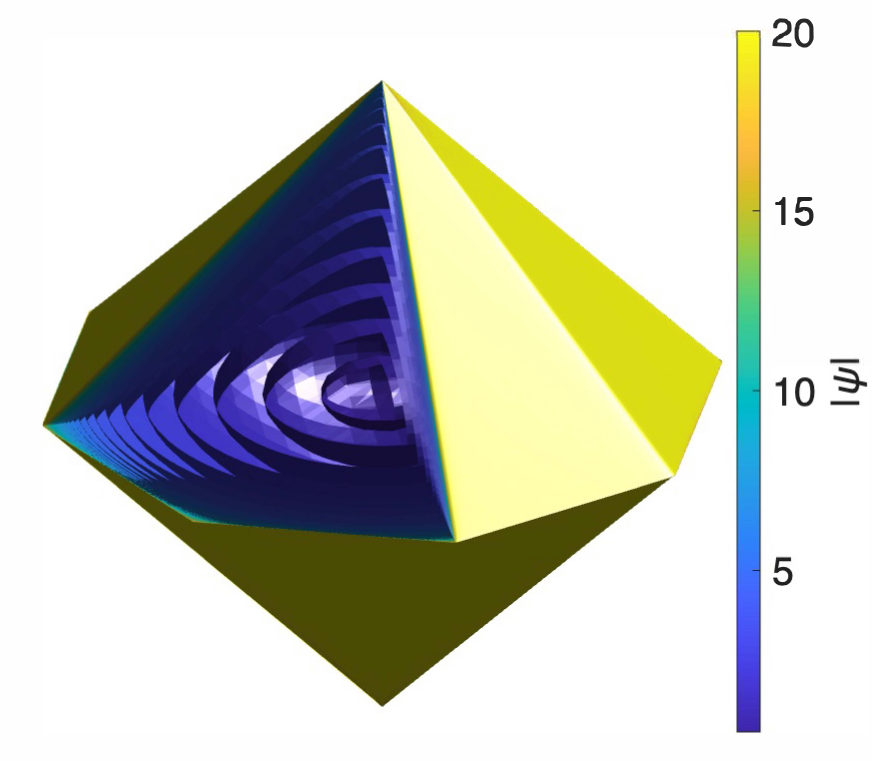}     \\
    (a)                                                      & (b)
  \end{tabular}
  \caption{Polytopes parametrized using $\nabla R^*(\bm\psi)$; cf.~\Cref{thm:polylegendre}. (a) A hexagon and (b) a bipyramid with an octagonal base. Colors indicate the magnitude of the latent variable $|\bm{\psi}|$.\label{fig:mapping}}
\end{figure*}

We aim to choose $R:\mathbb{R}^n\rightarrow \mathbb{R}\cup \{+\infty\}$ whose effective domain,
\begin{equation*}
  \operatorname{dom}(R):=\{\bm{p}\in\mathbb{R}^n:R(\bm{p})<+\infty\},
\end{equation*}
coincides with the polytope $P$, and whose gradient is (left-)invertible in $\operatorname{int}(P)$ but diverges on $\partial P$.
More specifically, we will choose $R$ of Legendre type (cf.~\Cref{def:legendre} and \Cref{thm:polylegendre}, below) so that $\nabla R^*$ is a smooth mapping $\nabla R^*\colon \mathbb{R}^n \to \operatorname{int}(P)$ as depicted in \Cref{fig:mapping}.
In fact, $\nabla R^*$ is the inverse of $\nabla R$, implying that
\begin{equation*}
  \nabla R^*(\nabla R(\bm{\eta}))=\bm{\eta}\quad\text{for all }\bm{\eta}\in\mathring{\mathcal{K}}.
\end{equation*}
This relationship allows us to define a latent variable $\bm{\psi}\in L^\infty(\Omega;\mathbb{R}^n)$ parameterizing the design variable via $\bm{\eta}=\nabla R^*(\bm{\psi})$.
In turn, the update rule~\cref{eq:md-full-formula} becomes
\begin{subequations}\label{eq:md-simpl}
  \begin{align}
    \bm{\psi}^{k+1/2} & = \bm{\psi}^k - \alpha_k \nabla F(\bm{\eta}^k), \\
    \bm{\psi}^{k+1}   & = \bm{\psi}^{k+1/2}-W^\top\bm{\mu}^k,
  \end{align}
\end{subequations}
where $\bm{\mu}^k\in\mathbb{R}^m_{\geq0}$ is chosen so that
\begin{equation}
  \label{eq:update_with_proj}
  \bm{\eta}^{k+1}:=(\nabla R)^*(\bm{\psi}^{k+1/2}-W^\top\bm{\mu}^k)=\argmin_{\bm{\eta}\in\mathcal{A}}\int_\Omega \mathcal{D}_R(\bm{\eta},\bm{\eta}^{k+1/2})\dd x.
\end{equation}

Mirror descent methods are known to be convergent for convex optimization problems \cite{Beck2003} under suitable assumptions. In \cite{simpl-math}, the authors proved that, when combined with backtracking line search, the SiMPL method for single-material topology optimization yields monotone-decreasing objectives and demonstrated its convergence.
\subsection{Choice of $R$}
The choice of the convex function $R$ plays a central role in mirror descent methods.
To encode the pointwise polytopal constraint, we make use of the concept of Legendre functions \cite[Section 26]{rockafellar1970convex}.
The following definitions and results are well-known in the literature on convex optimization \cite{rockafellar1970convex} but relevant to the construction of the SiMPL method.
\begin{definition}[Effective Domain and Subdifferential]
  Let $f:\mathbb{R}^n\to\mathbb{R}\cup\{+\infty\}$ be a proper convex function. We denote the subdifferential of $f$ at $\bm{x}\in\mathbb{R}^n$ by the (possibly empty) set $\partial f(\bm{x})$ defined by
  \begin{equation*}
    \partial f(\bm{x}):=\{\bm{x}^*\in\mathbb{R}^n\mid \bm{x}^*\cdot(\bm{y}-\bm{x})+f(\bm{x})\leq f(\bm{y})\;\forall \bm{y}\in\mathbb{R}^n\}.
  \end{equation*}
  The effective domain of $f$ (resp. $\partial f$) is denoted by
  \begin{equation*}
    \operatorname{dom}(f):=\{\bm{x}\in\mathbb{R}^n\mid f(\bm{x})<+\infty\},\quad \operatorname{dom}(\partial f):=\{\bm{x}\in\mathbb{R}^n\mid \partial f(\bm{x})\neq \emptyset\}.
  \end{equation*}
  When $\partial f(\bm{x})$ is a singleton, $f$ is differentiable at $\bm{x}$, and we write $\nabla f(\bm{x})$ for its unique element.
\end{definition}
\begin{definition}[Legendre Function]\label{def:legendre}
  Let $R:\mathbb{R}^n\to\mathbb{R}\cup\{+\infty\}$ be a proper convex function. We say $R$ is
  \begin{enumerate}
    \item \underline{essentially strictly convex}, if $R$ is strictly convex on every convex subset of $\operatorname{dom}(\partial R)$,
    \item \underline{essentially smooth}, if $R$ is differentiable on $\operatorname{int}\operatorname{dom}(R)\neq\emptyset$, and $|\nabla R(\bm{x}_n)|\to+\infty$ whenever $\bm{x}_n\in \operatorname{dom}(R)$ and $\bm{x}_n\to\partial \operatorname{dom}(R)$.
  \end{enumerate}
  If $R$ is both essentially strictly convex and essentially smooth, then $R$ is said to be of \underline{Legendre type}.
\end{definition}
\begin{lemma}[Legendre Function \cite{rockafellar1970convex}]\label{lem:legendre}
  Let $R:\mathbb{R}^n\to\mathbb{R}\cup\{+\infty\}$ be of Legendre type. Then its convex conjugate $R^*$, defined by
  \begin{equation*}
    R^*(\bm{y}):=\sup_{\bm{x}\in\mathbb{R}^n}\{\bm{x}\cdot \bm{y} - R(\bm{x})\mid \bm{x}\in \mathbb{R}^n\},
  \end{equation*}
  is also of Legendre type.
  The gradient mapping $\nabla R:\operatorname{dom}(\partial R)\to \operatorname{dom}(\partial R^*)$ is a homeomorphism with $\nabla R=(\nabla R^*)^{-1}$.
  Furthermore, $\operatorname{dom}(\partial R^*)=\mathbb{R}^n$ if and only if $R$ is superlinear, meaning $R(\bm{x})/|\bm{x}|\to+\infty$ as $|\bm{x}|\to+\infty$.
\end{lemma}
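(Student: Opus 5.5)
This is Rockafellar's Theorem 26.5 together with Corollary 13.3.1. I would prove it in three steps: first the duality of the two Legendre properties, then the homeomorphism, and finally the superlinearity criterion. Throughout, $R$ is closed, which is the standing assumption in Rockafellar's framework.

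\textbf{Step 1: $R^*$ is of Legendre type.} The basic tool is the Fenchel--Young equality: for closed proper convex $R$, $\bm{y}\in\partial R(\bm{x})$ if and only if $\bm{x}\in\partial R^*(\bm{y})$. Hence $\partial R^*=(\partial R)^{-1}$ as set-valued maps. Essential smoothness of $R$ is equivalent to $\partial R(\bm{x})$ being $\{\nabla R(\bm{x})\}$ on $\operatorname{int}\operatorname{dom}(R)$ and empty elsewhere; this is Rockafellar's Theorem 26.1. The proof of that equivalence is the main obstacle. I would show that the blow-up $|\nabla R(\bm{x}_n)|\to\infty$ at the boundary rules out subgradients at boundary points, using the fact that subgradients at boundary points arise as limits of nearby gradients plus normal-cone directions.

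Essential strict convexity of $R$ is then dual to $R^*$ being single-valued on its subdifferential domain. Suppose $\bm{x}_1\neq\bm{x}_2$ both lie in $\partial R^*(\bm{y})$. Then $\bm{y}$ is a subgradient of $R$ at both points, so $R$ is affine on the segment $[\bm{x}_1,\bm{x}_2]$, which contradicts strict convexity there. In the other direction, a failure of strict convexity on a convex subset of $\operatorname{dom}(\partial R)$ produces a common subgradient at two points, and hence a point of nondifferentiability of $R^*$. Combining these two dualities, with the roles of $R$ and $R^*$ swapped and using $R^{**}=R$, yields that $R^*$ is of Legendre type.

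\textbf{Step 2: $\nabla R$ is a homeomorphism.} By Step 1 both $\partial R$ and $\partial R^*$ are single-valued on their domains, and they are mutually inverse. So $\nabla R:\operatorname{dom}(\partial R)\to\operatorname{dom}(\partial R^*)$ is a bijection with inverse $\nabla R^*$. Both maps are continuous, because a convex function differentiable on an open set is automatically $C^1$ there (Rockafellar, Theorem 25.5). Since $\operatorname{dom}(\partial R)=\operatorname{int}\operatorname{dom}(R)$ is open, continuity applies, and $\nabla R$ is a homeomorphism.

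\textbf{Step 3: superlinearity criterion.} By Step 1, $\operatorname{dom}(\partial R^*)=\operatorname{int}\operatorname{dom}(R^*)$, so it suffices to show that $\operatorname{dom}(R^*)=\mathbb{R}^n$ if and only if $R$ is superlinear.

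If $R$ is superlinear, then for each $\bm{y}$ the function $\bm{x}\cdot\bm{y}-R(\bm{x})$ tends to $-\infty$ as $|\bm{x}|\to\infty$. Hence the supremum defining $R^*(\bm{y})$ is finite.

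Conversely, suppose $R^*$ is finite everywhere. Then it is continuous and hence bounded on every sphere $|\bm{y}|=c$. Fenchel--Young gives
\[
R(\bm{x})\ge \sup_{|\bm{y}|=c}\big(\bm{x}\cdot\bm{y}-R^*(\bm{y})\big)\ge c|\bm{x}|-\max_{|\bm{y}|=c}R^*(\bm{y}).
\]
Dividing by $|\bm{x}|$ and letting $|\bm{x}|\to\infty$ gives $\liminf R(\bm{x})/|\bm{x}|\ge c$. Since $c$ is arbitrary, $R$ is superlinear.
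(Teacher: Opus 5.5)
Your proposal is correct and takes essentially the paper's route: the paper gives no proof and simply cites Rockafellar, and you reconstruct that argument (Theorems 26.1, 26.3, 26.5 and 25.5, plus a direct Fenchel--Young growth bound for the criterion $\operatorname{dom}(R^*)=\mathbb{R}^n$). One precision in Step 1: to get essential smoothness of $R^*$ from single-valuedness of $\partial R^*$, you need the converse direction of Theorem 26.1 to the one you sketch, namely that a nontrivial normal cone at a boundary point of $\operatorname{dom}(R^*)$ would make any nonempty $\partial R^*$ there multivalued, after which the blow-up of $|\nabla R^*|$ follows from the closed graph of $\partial R^*$.
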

This lemma implies that a superlinear Legendre function $R$ induces a natural parametrization of $\operatorname{dom}(R)$ by $\nabla R^*$, which is a smooth mapping from $\mathbb{R}^n$ to $\operatorname{int}(\operatorname{dom}(R))$.

We are particularly interested in the case with $\operatorname{dom}(R)=P$, where $P$ is a convex polytope.
In \Cref{thm:polylegendre}, we construct a superlinear Legendre function $R$ with $\operatorname{dom}(R)=P$.
To this end, we first consider the special case where $P$ is the standard ($n-1$)-simplex $\Delta^{n-1}$, and then extend the result to general polytopes by using the simplex as a building block.
\begin{lemma}[Restricted Gibbs Entropy]\label{lem:gibbs}
  The affine hull of the standard $(n-1)$-simplex $\Delta^{n-1}:=\operatorname{conv}\{\bm{e}_1,\cdots,\bm{e}_n\}$ is given by
  \begin{equation*}
    \operatorname{aff}(\Delta^{n-1}):=\Big\{\sum_{i=1}^n \alpha_i\bm{e}_i\mid \bm{\alpha}\in\mathbb{R}^n, \sum_{i=1}^n\alpha_i=1\Big\}.
  \end{equation*}
  We consider the restricted topology on $\operatorname{aff}(\Delta^{n-1})$ induced by the ambient topology of $\mathbb{R}^n$, i.e., the collection of open sets in $\operatorname{aff}(\Delta^{n-1})$ is given by
  \begin{equation*}
    \tau:=\{U\cap \operatorname{aff}(\Delta^{n-1})\mid U\text{ is open in }\mathbb{R}^n\}.
  \end{equation*}
  Let $R_n:\mathbb{R}^n\to\mathbb{R}\cup\{+\infty\}$ be defined by
  \begin{equation*}
    R_n(\bm{x})=\begin{cases}
      \sum_{i=1}^{n} x_i\ln x_i & \text{when }\bm{x}\in\Delta^{n-1}, \\
      +\infty                   & \text{otherwise,}
    \end{cases}
  \end{equation*}
  with the convention $0\ln 0=0$.
  Then $R_n$ is a superlinear Legendre function in the restricted topology.
  In addition, the gradient of the convex conjugate $R_n^*$ acts as a left inverse of the subdifferential $\partial R_n$ in the sense that
  \begin{equation*}
    \begin{aligned}
      \nabla R_n^*(\partial R_n(\bm{x})) & =\{\bm{x}\}\text{ for all }\bm{0}\prec\bm{x}\in\Delta^{n-1}.
    \end{aligned}
  \end{equation*}
  Here, $\nabla R_n^*(\bm{y})$ is the softmax function
  \begin{equation*}
    \nabla R_n^*(\bm{y})=\frac{\exp(\bm{y})}{\sum_{i=1}^n\exp(y_i)}=:\operatorname{softmax}(\bm{y})\text{ for all }\bm{y}\in\mathbb{R}^n.
  \end{equation*}
  Here, $\exp(\bm{y})_j:=\exp(y_j)$ for all $j=1, \ldots,n$.
\end{lemma}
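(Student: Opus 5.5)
We verify the defining properties of \Cref{def:legendre} in the restricted topology on $A:=\operatorname{aff}(\Delta^{n-1})$, then compute the conjugate explicitly.

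\textbf{Domain and subdifferential.} By construction $\operatorname{dom}(R_n)=\Delta^{n-1}$. Its relative interior in $\tau$ is $\{\bm{x}\in\Delta^{n-1}:\bm{0}\prec\bm{x}\}$, and its relative boundary consists of the points with some $x_i=0$. On the relative interior, $R_n$ agrees with the smooth function $\sum_i x_i\ln x_i$. Hence
\begin{equation*}
  \partial R_n(\bm{x})=\{\ln\bm{x}+\bm{1}+t\bm{1}\mid t\in\mathbb{R}\}=\{\ln\bm{x}+t\bm{1}\mid t\in\mathbb{R}\}.
\end{equation*}
The $\bm{1}$-direction appears because $R_n=+\infty$ off $A$, which adds the normal cone $\mathbb{R}\bm{1}$ of $A$. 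On the boundary, a one-sided directional derivative $\ln x_i\to-\infty$ in the directions entering the interior shows $\partial R_n(\bm{x})=\emptyset$. So $\operatorname{dom}(\partial R_n)$ is exactly the relative interior. Modulo $\bm{1}$, i.e.\ regarding gradients as elements of the tangent space $\{\bm{v}:\sum v_i=0\}$, the gradient is unique; this is the sense in which $R_n$ is differentiable in the restricted topology.

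\textbf{Legendre type and superlinearity.} Essential strict convexity follows from the Hessian $\operatorname{diag}(1/x_i)$ being positive definite on the interior. For essential smoothness, the projection of $\ln\bm{x}$ onto the tangent space is $\ln\bm{x}-\tfrac1n\big(\sum_j\ln x_j\big)\bm{1}$. If $\bm{x}^k\to\bm{x}$ with some $x_i=0$, then, since some other coordinate stays at least $1/n$, the difference of two components of $\ln\bm{x}^k$ tends to $-\infty$. Hence the projected gradient norm diverges. Superlinearity is trivial: $\operatorname{dom}(R_n)$ is bounded, so $R_n(\bm{x})=+\infty$ for $|\bm{x}|$ large.

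\textbf{Conjugate and softmax.} We compute $R_n^*(\bm{y})=\sup_{\bm{x}\in\Delta^{n-1}}\{\bm{x}\cdot\bm{y}-\sum_i x_i\ln x_i\}$ via a Lagrange multiplier for $\sum_i x_i=1$. The objective is strictly concave, and its maximizer is interior because the derivative blows up at $x_i=0$. The stationarity condition $y_i-\ln x_i-1=\lambda$ gives $x_i\propto e^{y_i}$. Substituting back yields $R_n^*(\bm{y})=\ln\sum_i e^{y_i}$ (log-sum-exp), which is finite and differentiable on all of $\mathbb{R}^n$ with $\nabla R_n^*=\operatorname{softmax}$. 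This also confirms $\operatorname{dom}(\partial R_n^*)=\mathbb{R}^n$, consistent with \Cref{lem:legendre}. Finally, for $\bm{0}\prec\bm{x}\in\Delta^{n-1}$ and any $\bm{y}=\ln\bm{x}+t\bm{1}\in\partial R_n(\bm{x})$, softmax is invariant under shifts by $t\bm{1}$, so
\begin{equation*}
  \operatorname{softmax}(\bm{y})=\frac{\bm{x}}{\sum_i x_i}=\bm{x}.
\end{equation*}
This proves the left-inverse identity $\nabla R_n^*(\partial R_n(\bm{x}))=\{\bm{x}\}$.

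\textbf{Main obstacle.} The delicate step is interpreting ``Legendre in the restricted topology'' consistently. The subdifferential in $\mathbb{R}^n$ is a whole line, not a singleton, so \Cref{lem:legendre} cannot be applied verbatim. To handle this, I would identify $A$ with $\mathbb{R}^{n-1}$ by an affine chart and apply the classical definitions there. I would then check that the chart-level gradient corresponds to the tangential projection of $\ln\bm{x}$, so that the blow-up and invertibility statements transfer to the restricted topology.
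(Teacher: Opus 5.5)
Your proof is correct, but it takes a genuinely different route from the paper, whose proof only points to the entropic mirror-descent literature \cite{Beck2003,teboulle2018nolip}, where the negative entropy on the simplex and its log-sum-exp conjugate are standard.

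You verify each claim from scratch:
\begin{itemize}
\item You derive $\partial R_n(\bm x)=\ln\bm x+\mathbb{R}\bm 1$ on the relative interior via the normal cone of the affine hull.
\item You rule out subgradients on the relative boundary through the $-\infty$ directional derivative.
\item You make ``Legendre in the restricted topology'' precise by working with the tangential gradient $\ln\bm x-\frac1n\big(\sum_j\ln x_j\big)\bm 1$ and proving that it blows up at the relative boundary.
\item You compute $R_n^*(\bm y)=\ln\sum_i e^{y_i}$ explicitly. The softmax formula then follows by differentiation, and the left-inverse identity by shift invariance.
\end{itemize}

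The citation buys brevity. Your argument buys three things:
\begin{itemize}
\item a definite meaning for a notion the paper never defines;
\item a proof of the subdifferential formula that \Cref{rmk:simplex} states without justification;
\item the explicit log-sum-exp form of $R_q^*$, which \Cref{thm:polylegendre} later differentiates to obtain $\operatorname{diag}(\bm p)-\bm p\bm p^T$.
\end{itemize}

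You compute the conjugate directly rather than invoking \Cref{lem:legendre}, which, as you note, does not apply verbatim when the ambient subdifferential is a line. For that reason, the affine-chart identification you flag as the main obstacle is essentially already done by your tangential-gradient computation. A chart only changes the gradient by a fixed linear isomorphism, which preserves both its uniqueness and its blow-up at the relative boundary.
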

\begin{proof}
  This function has been well-studied in the literature on convex optimization, see in particular \cite{Beck2003,teboulle2018nolip}.
\end{proof}
\begin{remark}\label{rmk:simplex}
  The function $R_n$ is not differentiable in the ambient topology of $\mathbb{R}^n$ since its effective domain $\Delta^{n-1}$ has an empty interior. However, it is subdifferentiable with its subdifferential given by
  \begin{equation*}
    \partial R_n(\bm{x})=\begin{cases}
      \{\ln(\bm{x})+\alpha\bm{1}\mid \alpha\in\mathbb{R}\} & \text{when }\bm{0}\prec\bm{x}\in\Delta^{n-1}, \\
      \emptyset                                            & \text{otherwise,}
    \end{cases}
  \end{equation*}
  where $\ln(\bm{x})_j:=\ln(x_j)$ for all $j=1,\ldots,n$.
  Notice that $\nabla R_n^*$ is translation invariant, i.e., $\nabla R_n^*(\bm{y}+\alpha\bm{1})=\nabla R_n^*(\bm{y})$ for all $\alpha\in\mathbb{R}$.
  This implies that any two subgradients of $R_n$ at $\bm{x}\in\Delta^{n-1}$ are mapped to the same point $\bm{x}$ by $\nabla R_n^*$.
  This can also be utilized numerically to avoid a numerical overflow issue when computing $\nabla R_n^*(\bm{y})$ for large $\bm{y}$ by
  \begin{equation*}
    \nabla R_n^*(\bm{y})=\frac{\exp(\bm{y}-\max_i y_i)}{\sum_{i=1}^n\exp(y_i-\max_i y_i)}.
  \end{equation*}
\end{remark}

\begin{theorem}[Polytopal Legendre Function]\label{thm:polylegendre}
  For $1\leq n < q$, let $\bm{v}_1,\cdots,\bm{v}_q\in\mathbb{R}^n$ be the vertices of a convex polytope $P=\operatorname{conv}\{\bm{v}_1,\cdots,\bm{v}_q\}$ with non-empty interior. Then the following function $R:\mathbb{R}^n\to\mathbb{R}\cup\{+\infty\}$ is a superlinear Legendre function with $\operatorname{dom}(R)=P$:
  \begin{equation}\label{eq:polylegendre}
    R(\bm{x})=\inf_{\bm{\lambda}\in\Delta^{q-1}}\left\{\sum_{i=1}^q\lambda_i\ln \lambda_i=R_q(\bm{\lambda})\mid V\bm{\lambda}=\bm{x}\right\}.
  \end{equation}
  Here, $V\in\mathbb{R}^{n\times q}$ is the vertex matrix of $P$, whose $i$-th column is $\bm{v}_i$.
  Its convex conjugate is given by
  \begin{equation}\label{eq:polylegendre-conjugate}
    R^*(\bm{y})=R^*_q(V^T\bm{y}),
  \end{equation}
  with the gradient mapping
  \begin{equation}\label{eq:polymap}
    \nabla R^*:\bm{y}\in\mathbb{R}^n\mapsto V\nabla R_q^*(V^T\bm{y})=V\operatorname{softmax}(V^T\bm{y})\in \operatorname{int}(P).
  \end{equation}
  Finally, the $\nabla R^*$ is homeomorphism between $\mathbb{R}^n$ and $\operatorname{int}(P)$ with inverse $(\nabla R^*)^{-1}=\nabla R$.
\end{theorem}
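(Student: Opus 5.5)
The plan is to view $R$ as the image (infimal projection) of the restricted Gibbs entropy $R_q$ under the linear map $V$, i.e.\ $R = V R_q$ in the sense of \cite[Theorem~5.2]{rockafellar1970convex}. We then read off all claims from the conjugate formula and the smoothness and strict convexity of $R^*$.

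\textbf{Conjugate and domain.} First, $R$ is convex and proper as the image of a convex function under a linear map. Since $\Delta^{q-1}$ is compact and $R_q$ is continuous on it, the infimum in \eqref{eq:polylegendre} is attained whenever the feasible set is nonempty. It is nonempty exactly when $\bm{x}\in V\Delta^{q-1}=P$. Hence $\operatorname{dom}(R)=P$, and $R$ is bounded on $P$ (between $-\ln q$ and $0$); lower semicontinuity follows from the same compactness. The conjugate is computed directly by exchanging the two suprema:
\begin{equation*}
  R^*(\bm{y})=\sup_{\bm{x}}\sup_{V\bm{\lambda}=\bm{x}}\{\bm{x}\cdot\bm{y}-R_q(\bm{\lambda})\}=\sup_{\bm{\lambda}\in\Delta^{q-1}}\{\bm{\lambda}\cdot V^T\bm{y}-R_q(\bm{\lambda})\}=R_q^*(V^T\bm{y}),
\end{equation*}
which is \eqref{eq:polylegendre-conjugate}. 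By \Cref{lem:gibbs}, $R_q^*(\bm{z})=\ln\sum_i e^{z_i}$ is finite and $C^\infty$ on all of $\mathbb{R}^q$. The chain rule then gives \eqref{eq:polymap}. The image of $\nabla R^*$ lies in $\operatorname{int}(P)$ because softmax has strictly positive weights: a strictly positive convex combination of all vertices of a full-dimensional polytope is an interior point.

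\textbf{Legendre property via $R^*$.} Since $R$ is closed, $R=R^{**}$, so by \Cref{lem:legendre} it suffices to show that $R^*$ is of Legendre type. We also use that $\operatorname{dom}(R^*)=\mathbb{R}^n$ corresponds to superlinearity of $R$. Superlinearity can also be seen directly, since $R$ is $+\infty$ outside the bounded set $P$.

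\emph{Essential smoothness of $R^*$.} This is immediate: $R^*$ is differentiable on $\mathbb{R}^n$, which has empty boundary.

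\emph{Strict convexity of $R^*$.} This is the main obstacle. The Hessian is $\nabla^2R^*(\bm{y})=V\big(\operatorname{diag}(\bm{s})-\bm{s}\bm{s}^T\big)V^T$ with $\bm{s}=\operatorname{softmax}(V^T\bm{y})\succ\bm{0}$. For $\bm{w}\in\mathbb{R}^n$,
\begin{equation*}
  \bm{w}^T\nabla^2R^*(\bm{y})\,\bm{w}=\operatorname{Var}_{\bm{s}}\big(\bm{v}_i\cdot\bm{w}\big),
\end{equation*}
the variance of the values $\bm{v}_i\cdot\bm{w}$ under the distribution $\bm{s}$. Because $\bm{s}$ has full support, this variance vanishes only if all $\bm{v}_i\cdot\bm{w}$ are equal. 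In that case $P$ lies in a hyperplane, contradicting the nonempty interior, unless $\bm{w}=\bm{0}$. So the Hessian is positive definite and $R^*$ is strictly convex. Hence $R^*$, and therefore $R$, is of Legendre type.

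\textbf{Homeomorphism.} \Cref{lem:legendre} now gives that $\nabla R^*:\operatorname{dom}(\partial R^*)=\mathbb{R}^n\to\operatorname{dom}(\partial R)$ is a homeomorphism with inverse $\nabla R$. For a Legendre function, $\operatorname{dom}(\partial R)=\operatorname{int}\operatorname{dom}(R)=\operatorname{int}(P)$. This completes the plan.
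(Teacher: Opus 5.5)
Your proposal is correct and follows essentially the same route as the paper: you compute $R^*=R_q^*\circ V^T$ by exchanging suprema, reduce the Legendre and superlinearity claims for $R$ to those for $R^*$ via \Cref{lem:legendre}, and obtain strict convexity from the softmax-covariance Hessian $V(\operatorname{diag}(\bm{s})-\bm{s}\bm{s}^T)V^T$. The differences are minor: you rule out a nontrivial kernel directly (zero variance would put all vertices on a hyperplane) rather than translating so that $V\bm{1}=\bm{0}$ and using $\bm{1}\in\ker V=\operatorname{range}(V^T)^\perp$, and you explicitly check $\operatorname{dom}(R)=P$ and the closedness of $R$, which the paper's appeal to conjugate duality relies on without stating.
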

\begin{proof}
  Translating the polytope $P$ if necessary, we assume that the origin $\bm{0}$ is the vertex centroid, i.e., $V\bm{1}=\bm{0}$.
  Since $P$ has non-empty interior, there exists a simplex $S=\operatorname{conv}\{\bm{v}_{i_1},\cdots,\bm{v}_{i_{n+1}}\}\subset P$ with non-empty interior, which implies that $V$ has full row rank.
  Therefore, $V:\mathbb{R}^q\to\mathbb{R}^n$ (resp.\ $V^T:\mathbb{R}^n\to\mathbb{R}^q$) is surjective (resp.\ injective) with $\operatorname{span}\{\bm{1}\}\subset \operatorname{ker}(V)$.
  Observe that
  \begin{align*}
    R^*(\bm{y}) & =\sup_{\bm{x}\in\mathbb{R}^n}\{\bm{x}\cdot\bm{y}-R(\bm{x})\}                                                                                              \\
                & =\sup_{\bm{x}\in\mathbb{R}^n}\left\{\bm{x}\cdot\bm{y}-\inf_{\bm{\lambda}\in\Delta^{q-1}}\left\{R_q(\bm{\lambda})\mid V\bm{\lambda}=\bm{x}\right\}\right\} \\
                & =\sup_{\bm{\lambda}\in\Delta^{q-1}}\left\{\bm{\lambda}\cdot V^T\bm{y}-R_q(\bm{\lambda})\right\}=R_q^*(V^T\bm{y}),
  \end{align*}
  where the last line follows from the surjectivity of $V:\mathbb{R}^q\to\mathbb{R}^n$ and the definition of $R_q^*$.

  Following \Cref{lem:legendre}, $R$ is of Legendre type and superlinear if and only if $R^*$ is of Legendre type with $\operatorname{dom}(\partial R^*)=\mathbb{R}^n$.
  By the chain rule, $R^*$ is differentiable everywhere with $\nabla R^*(\bm{y})=V\nabla R_q^*(V^T\bm{y})$ for all $\bm{y}\in\mathbb{R}^n$.
  Therefore, it is enough to show that $R^*$ is essentially strictly convex.

  For any $\bm{z}\in\mathbb{R}^q$, the Hessian of $R_q^*$ at $\bm{z}$ is
  \begin{equation*}
    \nabla^2 R_q^*(\bm{z})=\operatorname{diag}(\bm{p})-\bm{p}\bm{p}^T,\qquad \bm{p}=\frac{\exp(\bm{z})}{\sum_j \exp(z_j)},
  \end{equation*}
  so that for any $\bm{u}\in\mathbb{R}^q$,
  \begin{equation*}
    \bm{u}^T\nabla^2 R_q^*(\bm{z})\bm{u}=\sum_{i=1}^q p_i u_i^2-\Big(\sum_{i=1}^q p_i u_i\Big)^2=\operatorname{Var}_{\bm{p}}(\bm{u}).
  \end{equation*}
  This variance is non-negative and vanishes if and only if $\bm{u}=\alpha\bm{1}$ for some $\alpha\in\mathbb{R}$. Hence $\nabla^2 R_q^*(\bm{z})$ is positive semidefinite with one-dimensional kernel $\operatorname{span}\{\bm{1}\}$.

  By the chain rule, the Hessian of $R^*$ at any $\bm{y}\in\mathbb{R}^n$, given by
  \begin{equation*}
    \nabla^2 R^*(\bm{y})=V\,\nabla^2 R_q^*(V^T\bm{y})\,V^T,
  \end{equation*}
  is also positive semidefinite since $\nabla^2 R_q^*(V^T\bm{y})$ is positive semidefinite.
  Moreover, the kernel of $\nabla^2 R^*(\bm{y})$ is characterized by
  \begin{equation*}
    \operatorname{ker}(\nabla^2 R^*(\bm{y}))=\{\bm{u}\in\mathbb{R}^n\mid V^T\bm{u}\in\operatorname{ker}(\nabla^2 R_q^*(V^T\bm{y}))\}=\operatorname{range}(V^T)\cap \operatorname{span}(\bm{1}).
  \end{equation*}
  Now, $V\bm{1}=\bm{0}$ gives $\bm{1}\in\ker(V)=\operatorname{range}(V^T)^\perp$. Therefore, $\nabla^2 R^*(\bm{y})$ has a trivial kernel and $R^*$ is strictly convex.
\end{proof}

The Legendre function $R$ constructed in \Cref{thm:polylegendre} lifts the Gibbs entropy $R_q$ of \Cref{lem:gibbs} from the simplex $\Delta^{q-1}$ to the polytope $P$ via the linear map $V$. While $R$ itself is defined implicitly through the variational problem \eqref{eq:polylegendre}, both its convex conjugate $R^*$ and the gradient mapping $\nabla R^*$ admit closed-form expressions \eqref{eq:polylegendre-conjugate}--\eqref{eq:polymap}.
\begin{remark}
  Mirror descent only requires evaluation of $\nabla R^*$, which is given pointwise by
  \begin{equation*}
    \bm{\eta}(x)=\nabla R^*\big(\bm{\psi}(x)\big)=V\operatorname{softmax}\big(V^T\bm{\psi}(x)\big)\quad\text{for a.e.\ }x\in\Omega.
  \end{equation*}
  The intermediate vector $\bm{\lambda}(x)=\operatorname{softmax}\big(V^T\bm{\psi}(x)\big)\in\Delta^{q-1}$ furnishes a (maximum-entropy) set of barycentric coordinates of $\bm{\eta}(x)$ with respect to the vertices of $P$, closely related to the maximum-entropy coordinates of \cite{sukumar2004}.
\end{remark}

\section{Projected Mirror Descent with Latent Variable}
Building on the Legendre function of the previous section, this section establishes well-posedness of the mirror descent update \eqref{eq:md-full-formula} in the infinite-dimensional setting.
We begin by showing that the essential interior $\mathring{\mathcal{K}}$ of the constraint set $\mathcal{K}$ can be parametrized by a latent variable $\bm{\psi}$ using the homeomorphism property $\nabla R^* =(\nabla R)^{-1}$.
In the remainder of the paper, we continue to denote the superposition operator $\nabla\mathcal{R}$ by $\nabla R$ when there is no ambiguity; cf.~\Cref{sub:Notation}.

\begin{lemma}[Parametrization of $\mathring{\mathcal{K}}$]\label{lem:parametrization}
  Let $P\in\mathbb{R}^n$ be a closed polytope with non-empty interior, and $R$ be the Legendre function defined as in \Cref{thm:polylegendre}.
  We define the superposition operator $\nabla \mathcal{R}^*:L^\infty(\Omega;\mathbb{R}^n)\rightarrow \mathcal{K}$ by
  \begin{equation*}
    \nabla \mathcal{R}^*(\bm{\psi})(x):=\nabla R^*(\bm{\psi}(x)) \quad \text{f.a.e. } x\in \Omega.
  \end{equation*}
  Then $\mathring{\mathcal{K}}$ is the image of $\nabla \mathcal{R}^*$, in the sense that
  \begin{equation*}
    \mathring{\mathcal{K}} = \{\nabla \mathcal{R}^*(\bm{\psi})\mid\bm{\psi}\in L^\infty(\Omega;\mathbb{R}^n)\}.
  \end{equation*}
  This defines a parameterization of $\mathring{\mathcal{K}}$ by $L^\infty(\Omega;\mathbb{R}^n)$ because the underlying map is invertible.
\end{lemma}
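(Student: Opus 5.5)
We prove the two inclusions separately, using throughout the homeomorphism $\nabla R^*\colon\mathbb{R}^n\to\operatorname{int}(P)$ with inverse $\nabla R$ from \Cref{thm:polylegendre}.

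\emph{Measurability.} For the superposition operators to make sense on $L^\infty$ we first record that $\nabla R^*$ and $\nabla R$ are continuous. Hence $x\mapsto\nabla R^*(\bm{\psi}(x))$ and $x\mapsto\nabla R(\bm{\eta}(x))$ are measurable whenever $\bm{\psi}$, respectively $\bm{\eta}$, is measurable.

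\emph{Inclusion $\supseteq$.} Let $\bm{\psi}\in L^\infty(\Omega;\mathbb{R}^n)$. Then $\bm{\psi}(x)\in \overline{B}_r(\bm{0})$ for a.e.\ $x$, where $r=\|\bm{\psi}\|_{L^\infty}$. The set $C:=\nabla R^*(\overline{B}_r(\bm{0}))$ is the continuous image of a compact set, so it is compact. By \eqref{eq:polymap} it is contained in $\operatorname{int}(P)$. The function $\operatorname{dist}(\cdot,\partial P)$ is continuous and strictly positive on $\operatorname{int}(P)$, so it attains a positive minimum $\delta$ on $C$. Consequently $\operatorname*{ess\,inf}_x\operatorname{dist}(\nabla R^*(\bm{\psi}(x)),\partial P)\geq\delta>0$. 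Boundedness is automatic because $P$ is bounded, so $\nabla\mathcal{R}^*(\bm{\psi})\in\mathring{\mathcal{K}}$.

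\emph{Inclusion $\subseteq$.} Let $\bm{\eta}\in\mathring{\mathcal{K}}$, and choose $\delta>0$ with $\operatorname{dist}(\bm{\eta}(x),\partial P)\geq\delta$ for a.e.\ $x$. Set $P_\delta:=\{\bm{p}\in P\mid \operatorname{dist}(\bm{p},\partial P)\geq\delta\}$. This set is closed and bounded, hence compact, and it lies in $\operatorname{int}(P)$. Since $\nabla R=(\nabla R^*)^{-1}$ is continuous on $\operatorname{int}(P)$, the image $\nabla R(P_\delta)$ is compact and therefore bounded. Define $\bm{\psi}(x):=\nabla R(\bm{\eta}(x))$. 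This is measurable and essentially bounded, so $\bm{\psi}\in L^\infty(\Omega;\mathbb{R}^n)$, and $\nabla\mathcal{R}^*(\bm{\psi})=\bm{\eta}$ a.e. The same computation gives injectivity: if $\nabla R^*(\bm{\psi}_1(x))=\nabla R^*(\bm{\psi}_2(x))$ a.e., then applying $\nabla R$ yields $\bm{\psi}_1=\bm{\psi}_2$ a.e. Thus $\nabla\mathcal{R}^*\colon L^\infty\to\mathring{\mathcal{K}}$ is a bijection whose inverse is the superposition operator of $\nabla R$.

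\emph{Main obstacle.} The delicate step is the second inclusion: we need $\nabla R$ to be bounded on the compact set $P_\delta$, which requires the continuity of $\nabla R$ on $\operatorname{int}(P)$. Although $R$ is defined only implicitly by \eqref{eq:polylegendre}, this continuity follows from \Cref{lem:legendre} together with \Cref{thm:polylegendre}, which give $\operatorname{int}(P)=\operatorname{dom}(\partial R)$ and that $\nabla R$ is a homeomorphism onto $\mathbb{R}^n$. It is also worth noting that the uniform distance to $\partial P$ in the definition \eqref{eq:pointwise-int} of $\mathring{\mathcal{K}}$ is exactly what makes $\bm{\psi}$ essentially bounded. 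Without it, essential smoothness of $R$ would force $|\nabla R(\bm{\eta}(x))|\to\infty$ as $\bm{\eta}(x)$ approaches $\partial P$, and $\bm{\psi}$ would fail to be in $L^\infty$.
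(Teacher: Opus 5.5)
Your proof is correct and follows the paper's argument for the inclusion $\mathring{\mathcal{K}}\subseteq\nabla\mathcal{R}^*(L^\infty)$: trap $\bm{\eta}(x)$ a.e.\ in a compact subset of $\operatorname{int}(P)$, use continuity of $\nabla R=(\nabla R^*)^{-1}$ to get $\bm{\psi}=\nabla R(\bm{\eta})\in L^\infty$, and deduce uniqueness from the bijectivity of $\nabla R^*$. You also prove the reverse inclusion $\nabla\mathcal{R}^*(L^\infty)\subseteq\mathring{\mathcal{K}}$, via the compact image of a ball and a positive minimum of $\operatorname{dist}(\cdot,\partial P)$; the paper leaves that step implicit, and your argument for it is sound.
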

\begin{proof}
  Let $\bm{p}\in \operatorname{int}(P)$ be arbitrary. Then, by \Cref{thm:polylegendre}, the gradient map $\nabla R^*$ is bijective with the inverse mapping $\nabla R$.
  Now, consider an arbitrary $\bm{\eta} \in \mathring{\mathcal{K}}$.
  By definition, we have
  \begin{equation*}
    0<\epsilon:=\operatorname*{ess\,inf}_{x\in\Omega}\big(\operatorname{dist}(\bm{\eta}(x),\partial P)\big).
  \end{equation*}
  This and the boundedness of $P$ imply that there exists a compact subset $C\subset \operatorname{int}(P)$ such that $\bm{\eta}(x) \in C$ for almost every $x \in \Omega$.
  By \Cref{thm:polylegendre}, $\nabla R$ is a homeomorphism from $\operatorname{int}(P)$ to $\mathbb{R}^n$.
  Therefore, the image of $C$ under $\nabla R$, denoted by $D:=\nabla R(C)$, is a compact subset of $\mathbb{R}^n$.

  We define $\bm{\psi}(x) := \nabla R(\bm{\eta}(x))\in D$ for almost every $x$ in $\Omega$.
  Since $\nabla R$ is continuous and $\bm{\eta}$ is measurable, $\bm{\psi}$ is a measurable function.
  Furthermore, the compactness of $D$ implies that $\bm{\psi}$ is essentially bounded; i.e., $\bm{\psi}\in L^\infty(\Omega;\mathbb{R}^n)$.
  Finally, we have from \Cref{thm:polylegendre} that $$\nabla \mathcal{R}^*(\bm{\psi})(x)=\nabla R^*\Big((\nabla R\big(\bm{\eta}\big(x)\big)\Big)=\bm{\eta}(x).$$
  The bijectivity of the mapping $\nabla R^*$ implies that the characterization of $\bm{\eta}$ by $\bm{\psi}$ is unique.
\end{proof}
\begin{remark}[Polytopes with empty interior]
  With a minor modification, the construction above can be applied to any lower-dimensional convex polytope $P$, including $\Delta^{n-1}$. In this case, the function $R$ is of Legendre type in the restricted topology of the affine hull of $P$. Similar to \Cref{lem:gibbs}, the gradient mapping $\nabla R^*$ will have a non-trivial invariant direction corresponding to the orthogonal complement of the affine hull. Therefore, the characterization of $\mathring{\mathcal{K}}$ by $\bm{\psi}\in L^\infty(\Omega;\mathbb{R}^n)$ is unique up to translation in the invariant direction.
  Equipped with the restricted topology, lower-dimensional polytopes can also be used as constraint sets, and the remaining analysis in this section still holds where $\nabla R$ is understood as an equivalence class in $\mathbb{R}^n/\operatorname{aff}(P)^\perp$. For clarity and simplicity, we focus only on convex polytopes with non-empty interiors.
\end{remark}
\begin{lemma}\label{lem:lsc-bregman}
  Let $P$ be a convex polytope with non-empty interior and $R$ be the Legendre function defined as in \Cref{thm:polylegendre}.
  Then the integral functional induced by the Legendre function,
  \begin{equation*}
    I_R:\bm{\eta} \in \mathcal{K}\mapsto \int_\Omega R(\bm{\eta})\dd x,
  \end{equation*}
  is strictly convex in $\mathring{\mathcal{K}}$ and lower-semicontinuous in $L^1$.
\end{lemma}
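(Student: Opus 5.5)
The plan is to first collect pointwise properties of $R$ on $P$ and then pass them to the integral functional $I_R$.

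\emph{Pointwise properties.} By \eqref{eq:polylegendre}, $R$ is the infimal projection of the Gibbs entropy $R_q$ under the linear map $V$. For $\bm{x}\in P$ the feasible set $\{\bm{\lambda}\in\Delta^{q-1}: V\bm{\lambda}=\bm{x}\}$ is nonempty and compact, and $R_q$ is continuous on $\Delta^{q-1}$. Hence the infimum is attained, and
\begin{equation*}
  -\ln q\le R(\bm{x})\le 0\quad\text{for all }\bm{x}\in P .
\end{equation*}
The function $R$ is convex, being the image of a convex function under a linear map. It is also lower semicontinuous on $\mathbb{R}^n$. To see this, take $\bm{x}_k\to\bm{x}$ with minimizers $\bm{\lambda}_k$. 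By compactness of $\Delta^{q-1}$, a subsequence along which $\liminf R(\bm{x}_k)$ is realized converges to some $\bm{\lambda}$ with $V\bm{\lambda}=\bm{x}$. Continuity of $R_q$ then gives $R(\bm{x})\le\liminf R(\bm{x}_k)$. Since $P$ is closed, the set where $R=+\infty$ is handled automatically. In particular, $R|_P$ is bounded and lower semicontinuous. Strict convexity on $\operatorname{int}(P)$ follows from \Cref{thm:polylegendre}, since $R$ is of Legendre type and therefore essentially strictly convex, with $\operatorname{dom}(\partial R)=\operatorname{int}(P)$ by essential smoothness.

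\emph{Strict convexity of $I_R$ on $\mathring{\mathcal{K}}$.} The integral is well defined and finite on $\mathcal{K}$ because $R$ is bounded on $P$ and $R(\bm{\eta}(x))$ is measurable as a lower semicontinuous function of a measurable map. Convexity follows by integrating the pointwise inequality. For strictness, take $\bm{\eta}_1\neq\bm{\eta}_2$ in $\mathring{\mathcal{K}}$ and $t\in(0,1)$. The two functions differ on a set $E$ of positive measure, and for a.e.\ $x\in E$ both values lie in $\operatorname{int}(P)$. On $E$ the pointwise convexity inequality is strict, while on $\Omega\setminus E$ it holds with equality. Integrating therefore gives a strict inequality.

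\emph{Lower semicontinuity in $L^1$.} Let $\bm{\eta}_k\to\bm{\eta}$ in $L^1$ with $\bm{\eta}_k\in\mathcal{K}$. Since $\mathcal{K}$ is $L^1$-closed, with $P$ closed and a.e.-convergent subsequences available, we have $\bm{\eta}\in\mathcal{K}$. Pass to a subsequence realizing $\liminf I_R(\bm{\eta}_k)$, and then to a further subsequence converging a.e. Pointwise lower semicontinuity gives $R(\bm{\eta}(x))\le\liminf_k R(\bm{\eta}_k(x))$ a.e. The functions $R(\bm{\eta}_k)+\ln q\ge0$ are nonnegative, and $\Omega$ is bounded so that constants are integrable. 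Fatou's lemma then yields $I_R(\bm{\eta})\le\liminf I_R(\bm{\eta}_k)$.

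The main obstacle is the pointwise lower semicontinuity of $R$ up to $\partial P$. Unlike $R^*$, $R$ is given only implicitly, so continuity at the boundary is not free. The compactness argument over minimizers $\bm{\lambda}_k$ is the step to execute carefully; it in fact yields continuity of $R$ on $P$, which is more than is needed.
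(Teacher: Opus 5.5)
Your proof is correct and follows essentially the same route as the paper. Strict convexity comes from the essential strict convexity of $R$ on $\operatorname{int}(P)$ together with a positive-measure set where the two designs differ; $L^1$-lower semicontinuity comes from a liminf-realizing subsequence, a further a.e.-convergent subsequence, pointwise lower semicontinuity of $R$, the lower bound $R\ge -\ln q$, and Fatou's lemma. You additionally justify the lower semicontinuity and boundedness of $R$, which the paper simply asserts. One minor caveat is that your compactness argument over minimizers $\bm{\lambda}_k$ yields only lower semicontinuity of $R$, not continuity on $P$: upper semicontinuity would need a separate argument, for instance using the polyhedral structure of $P$. That remark is never used, so the proof is unaffected.
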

\begin{proof}
  The strict convexity of $I_R$ in $\mathring{\mathcal{K}}$ follows from the essential strict convexity of $R$ and the positivity of measure of $\{x\in\Omega:\bm{\eta}(x)\neq \bm{q}(x)\}$ for any two distinct $\bm{\eta},\bm{q}\in \mathring{\mathcal{K}}$ in $L^1$.
  Therefore, it is enough to verify the lower-semicontinuity of $I_R$.

  Let $\bm{\eta}\in\mathcal{K}$ be arbitrary and $\{\bm{\eta}_k\}\subset L^1(\Omega;\mathbb{R}^n)$ with $\bm{\eta}_k\to\bm{\eta}$ in $L^1$.
  We denote $L:=\liminf_k I_R(\bm{\eta}_k)$, which is finite since $R$ is bounded below and $\Omega$ has finite measure.
  We may first extract $\{\bm{\eta}_{k_\ell}\}$ with $I_R(\bm{\eta}_{k_\ell})\to L$, and then a further subsequence (still denoted $\{\bm{\eta}_{k_\ell}\}$) with $\bm{\eta}_{k_\ell}(x)\to\bm{\eta}(x)$ for almost every $x\in \Omega$.
  Since $R$ is lower-semicontinuous in $\mathbb{R}^n$ and bounded below, Fatou's lemma gives
  \begin{align*}
    I_R(\bm{\eta}) \leq \int_\Omega \liminf_\ell R(\bm{\eta}_{k_\ell})\,\dd x
    \leq \liminf_\ell I_R(\bm{\eta}_{k_\ell}) = L = \liminf_k I_R(\bm{\eta}_k).
  \end{align*}
\end{proof}
We are now ready to show that our iterates evolve in $\mathring{\mathcal{K}}\cap \mathcal{M}$.
The following theorem implies that the unconstrained mirror descent update $\bm{\eta}^{k+1/2}=\nabla R^*(\bm{\psi}^{k+1/2})$ belongs to $\mathring{\mathcal{K}}$.

\begin{theorem}[Interior Evolution]\label{thm:md-interior}
  Suppose that $F$ satisfies Assumption~\ref{asum:grad}.
  Let $\alpha>0$ and $R$ be a Legendre function with $P=\operatorname{dom}(R)$ defined as in \Cref{thm:polylegendre}.
  In addition, let $\bm{v}\in \mathring{\mathcal{K}}$ with $\nabla R(\bm{v})\in L^\infty(\Omega;\mathbb{R}^n)$.
  Then
  \begin{equation*}
    \min_{\bm{\eta}\in L^1(\Omega;\mathbb{R}^n)}\Big\{J(\bm{\eta}):=\int_\Omega \alpha \nabla F(\bm{v})\cdot \bm{\eta} + \mathcal{D}_R(\bm{\eta},\bm{v})\dd x\Big\}
  \end{equation*}
  is attained at a unique point $\bm{\eta}^\star\in \mathcal{K}$. Furthermore, $\bm{\eta}^\star$ is uniformly bounded away from $\partial P$, implying that
  $$\bm{\eta}^\star\in \mathring{\mathcal{K}}\text{ and }\nabla R(\bm{\eta}^\star)\in L^\infty(\Omega;\mathbb{R}^n).$$
\end{theorem}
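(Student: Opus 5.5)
The plan is to solve the minimization pointwise and write down the minimizer explicitly through the mirror map. Because $J$ is an integral of a normal integrand with no coupling between different points $x$, I would first treat the finite-dimensional problem for a.e.\ fixed $x$:
\begin{equation*}
  \min_{\bm{p}\in\mathbb{R}^n}\; \alpha\,\bm{g}\cdot\bm{p} + R(\bm{p}) - R(\bm{v}(x)) - \nabla R(\bm{v}(x))\cdot(\bm{p}-\bm{v}(x)),\qquad \bm{g}:=\nabla F(\bm{v})(x).
\end{equation*}
Up to terms independent of $\bm{p}$, this amounts to minimizing $R(\bm{p})-\bm{y}\cdot\bm{p}$ with $\bm{y}:=\nabla R(\bm{v}(x))-\alpha\bm{g}$. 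The infimum of this is $-R^*(\bm{y})$. By \Cref{thm:polylegendre}, $R^*$ is differentiable on all of $\mathbb{R}^n$ and $\operatorname{dom}(\partial R^*)=\mathbb{R}^n$. Fenchel--Young equality then says the minimizer is $\bm{p}^\star=\nabla R^*(\bm{y})\in\operatorname{int}(P)$. Uniqueness follows from the essential strict convexity of $R$ together with the fact that any minimizer $\bm{p}$ must satisfy $\bm{y}\in\partial R(\bm{p})$, so that $\bm{p}=\nabla R^*(\bm{y})$ by \Cref{lem:legendre}.

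Next I would define $\bm{\psi}:=\nabla R(\bm{v})-\alpha\nabla F(\bm{v})$. This lies in $L^\infty(\Omega;\mathbb{R}^n)$, since $\nabla R(\bm{v})\in L^\infty$ by hypothesis and $\nabla F(\bm{v})\in L^\infty$ by \Cref{asum:grad}. I then set $\bm{\eta}^\star:=\nabla\mathcal{R}^*(\bm{\psi})$. It is measurable because $\nabla R^*$ is continuous.

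The claim that $\bm{\eta}^\star$ is uniformly bounded away from $\partial P$ follows from \Cref{lem:parametrization}, or more directly: $\bm{\psi}(x)$ lies a.e.\ in a closed ball $B$, and the image $\nabla R^*(B)$ is a compact subset of $\operatorname{int}(P)$. Hence $\bm{\eta}^\star\in\mathring{\mathcal{K}}$. Moreover $\nabla R(\bm{\eta}^\star)=\bm{\psi}\in L^\infty$, using the homeomorphism $\nabla R=(\nabla R^*)^{-1}$.

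It then remains to pass from the pointwise statement to the integral problem over $L^1$.
\begin{itemize}
  \item For any $\bm{\eta}\in L^1$, the integrand is bounded below pointwise by its value at $\bm{\eta}^\star(x)$. That lower bound is an integrable function, since $\bm{\eta}^\star$ and $\bm{\psi}$ are bounded and $R$ is bounded on $P$. So $J(\bm{\eta})\ge J(\bm{\eta}^\star)>-\infty$, with $J(\bm{\eta})=+\infty$ whenever $\bm{\eta}\notin\mathcal{K}$ on a set of positive measure.
  \item For uniqueness, suppose $J(\bm{\eta})=J(\bm{\eta}^\star)$. Then the nonnegative difference of integrands vanishes a.e., so $\bm{\eta}(x)$ minimizes the pointwise problem for a.e.\ $x$. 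Pointwise uniqueness forces $\bm{\eta}=\bm{\eta}^\star$ a.e.
\end{itemize}
Alternatively, uniqueness could be taken from the strict convexity in \Cref{lem:lsc-bregman}, but the pointwise argument avoids having to restrict to $\mathring{\mathcal{K}}$.

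I expect the main obstacle to be making the pointwise minimization rigorous at boundary points of $P$. There $R$ is finite but not differentiable, so I must rule out a minimizer on $\partial P$. I would handle this with the subdifferential optimality condition $\bm{y}\in\partial R(\bm{p})$ together with essential smoothness: for a Legendre function, $\partial R(\bm{p})=\emptyset$ at boundary points of $\operatorname{dom}(R)$ (Rockafellar, Thm.~26.1). This forces $\bm{p}\in\operatorname{int}(P)$ and hence $\bm{p}=\nabla R^*(\bm{y})$.
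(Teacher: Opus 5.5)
Your proof is correct, but it takes a more direct route than the paper's.

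The paper argues in two stages. First it obtains existence abstractly by the direct method: Dunford--Pettis gives weak sequential compactness of $\mathcal{K}$ in $L^1$, and \Cref{lem:lsc-bregman} gives lower semicontinuity of $J$. Strict convexity then yields uniqueness only within $\mathring{\mathcal{K}}$. Second, it invokes the interchange principle for decomposable spaces (Rockafellar--Wets, Thm.~14.60) to reduce to the pointwise condition $\bm{0}\in\alpha\nabla F(\bm{v})+\partial R(\bm{\eta})-\nabla R(\bm{v})$. Essential smoothness rules out $\partial P$, and $\nabla R(\bm{\eta}^\star)=\nabla R(\bm{v})-\alpha\nabla F(\bm{v})\in L^\infty$ follows.

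You share this pointwise analysis, including the boundary exclusion via emptiness of $\partial R$ on $\partial P$. The difference is that you skip the existence step. You write down the candidate $\bm{\eta}^\star=\nabla R^*\big(\nabla R(\bm{v})-\alpha\nabla F(\bm{v})\big)$ and certify it by integrating the pointwise inequality against a bounded, hence integrable, lower bound. Uniqueness then follows because a nonnegative integrand with zero integral vanishes a.e. This gives uniqueness over all of $L^1$ at once and avoids the paper's split into ``at most one minimizer in $\mathring{\mathcal{K}}$'' and ``no minimizer outside $\mathring{\mathcal{K}}$.''

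What your version buys:
\begin{itemize}
\item It needs no compactness, no lower semicontinuity, and no interchange or measurable-selection theorem.
\item It shows explicitly that the minimizer is $\nabla R^*$ applied to the unconstrained latent step $\bm{\psi}^{k+1/2}$ of \Cref{alg:pmd-gbb}.
\end{itemize}

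What the paper's structure buys is reusability. The direct-method existence argument carries over essentially verbatim to \Cref{cor:bregman-projection}. There the global constraint couples different points, so an explicit pointwise formula is only available once a multiplier $\bm{\mu}^\star$ has been produced. At that stage your verification argument would apply equally to the Lagrangian.
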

\begin{proof}
  The proof consists of two parts: (1) existence of $\bm{\eta} ^\star$ in $ \mathcal{K}$ and uniqueness in $\mathring{\mathcal{K}}$; (2) $\nabla R(\bm{\eta}^\star)\in L^\infty(\Omega;\mathbb{R}^n)$.

  \textbf{Part 1}. The first part of the proof is analogous to \cite[Lemma 3.3]{simpl-math}.
  $\mathcal{K}$ is uniformly integrable and bounded in $L^1$ as $P$ is closed, bounded convex set in $\mathbb{R}^n$ and $\Omega$ has finite measure.
  The Dunford--Pettis theorem \cite[Theorem 2.4.5]{attouch2014variational} implies that $\mathcal{K}$ is weakly sequentially compact in $L^1(\Omega;\mathbb{R}^n)$.
  Recall that $F$ satisfies \Cref{asum:grad} and $\mathcal{D}_R$ is bounded below. This implies that $J$ admits a finite infimum.
  Let $\{\bm{\eta}_i\}\subset \mathcal{K}$ be an infimizing sequence of $J$.
  The weak sequential compactness implies that there exists a subsequence $\{\bm{\eta}_{i_k}\}$ that is $L^1$-weakly convergent to some point $\bm{\eta}^\star\in \mathcal{K}$.
  \Cref{lem:lsc-bregman} implies that $J$ is lower-semicontinuous in $L^1$, and hence
  \begin{equation*}
    J(\bm{\eta}^\star)\leq\lim\inf_{k\to \infty}J(\bm{\eta}_{i_k}).
  \end{equation*}
  Therefore, $\bm{\eta}^\star\in \mathcal{K}$ is a minimizer.
  Let $\mathcal{K}^\star$ be the set of minimizers, which is not empty.
  The strict convexity of $I_R$ in $\mathring{\mathcal{K}}$ implies that $J$ is also strictly convex, and hence $\mathcal{K}^\star\cap \mathring{\mathcal{K}}$ has at most one element.
  Therefore, the uniqueness will follow when $\mathcal{K}^\star\backslash \mathring{\mathcal{K}}$ is empty.

  \textbf{Part 2}. Since $L^1(\Omega;\mathbb{R}^n)$ is decomposable, we can interchange the minimization and integral \cite[Theorem 14.60]{Rockafellar1998}. Furthermore, the minimizer can be identified pointwise almost everywhere by the first-order optimality condition
  \begin{equation*}
    \bm{0}\in\alpha\nabla F(\bm{v}(x))+\partial R(\bm{\eta}(x))-\nabla R(\bm{v}(x)) ~\text{ f.a.e. }x\in\Omega.
  \end{equation*}
  Since $R$ is essentially smooth, essentially strictly convex, and coercive, this pointwise minimization problem has a unique solution. Also, the essential smoothness implies that the subdifferential of $R$ is an empty set at $\partial P$. Therefore, the solution $\bm{\eta}^\star(x)$ should lie in the interior of $\operatorname{dom}(R)=P$ for almost every $x$ in $\Omega$.
  Since $\nabla R$ is differentiable on $\operatorname{int}(P)$, we can rearrange terms to arrive at the identity
  \begin{equation*}
    \nabla R(\bm{\eta}(x))=\nabla R(\bm{v}(x)) - \alpha\nabla F(\bm{v}(x)).
  \end{equation*}
  Since $\nabla R(\bm{v}), \nabla F(\bm{v})\in L^\infty(\Omega;\mathbb{R}^n)$, the right hand side is uniformly bounded by $$M:=\alpha\|\nabla F(\bm{v})\|_{L^\infty(\Omega)} +\| \nabla R(\bm{v})\|_{L^\infty(\Omega)}$$
  This shows that $\nabla R(\bm{\eta}^\star)\in L^\infty(\Omega;\mathbb{R}^n)$.
  Using the inverse relation \Cref{eq:polymap}, we have $\bm{\eta}^\star=\nabla R^*(\bm{\psi})$ for some $\bm{\psi}\in L^\infty(\Omega;\mathbb{R}^n)$ with $\|\bm{\psi}\|_{L^\infty(\Omega)}\leq M$.
  Then \Cref{lem:parametrization} implies that $\bm{\eta}^\star\in \mathring{\mathcal{K}}$.
\end{proof}
Finally, we show that the mirror descent method produces strictly feasible iterates $\{\bm{\eta}^k\}\subset \mathring{\mathcal{K}}\cap\mathcal{M}$.
\begin{corollary}[Projected Mirror Descent]\label{cor:bregman-projection}
  Let $P$ be a convex polytope with non-empty interior and $R$ be the Legendre function defined as in \Cref{thm:polylegendre}.
  Suppose that there exists $\bm{\eta}$ such that
  \begin{equation*}
    \bm{\eta}(x)\in P\text{ a.e. }x\in \Omega,\; \int_\Omega W\bm{\eta}\dd x\prec \bm{b}.
  \end{equation*}
  If $F$ satisfies \Cref{asum:grad}, $\bm{v}\in \mathring{\mathcal{K}}$ with $\nabla R(\bm{v})\in  L^\infty(\Omega;\mathbb{R}^n)$, and $\alpha>0$,
  then
  \begin{equation*}
    \begin{aligned}
      \min_{\bm{\eta}\in L^1(\Omega;\mathbb{R}^n)} & \int_\Omega\alpha\nabla F(\bm{v})\cdot\bm{\eta}+\mathcal{D}_R(\bm{\eta},\bm{v})\dd x \\
      \subto                                       & \int_\Omega W\bm{\eta}\preccurlyeq \bm{b}
    \end{aligned}
  \end{equation*}
  has a unique minimizer $\bm{\eta}^\star$. Furthermore, $\bm{\eta}^\star$ is uniformly bounded away from $\partial P$ so that
  $$\bm{\eta}^\star\in \mathring{\mathcal{K}}\text{ and }\nabla R(\bm{\eta}^\star)\in L^\infty(\Omega;\mathbb{R}^n).$$
  Consequently, the minimizer $\bm{\eta}^\star$ obtained in $L^1$ is also the unique minimizer of the problem when embedded in the $L^\infty$-topology.
\end{corollary}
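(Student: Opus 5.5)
We plan to follow the two-part structure of the proof of \Cref{thm:md-interior}, with a finite-dimensional Lagrange multiplier argument added for the global constraint.

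\textbf{Existence.} The feasible set $\mathcal{K}\cap\mathcal{M}$ is convex and closed under weak $L^1$ convergence, because $\bm{\eta}\mapsto\int_\Omega W\bm{\eta}\dd x$ is a continuous linear functional on $L^1$. It is also a subset of the weakly sequentially compact set $\mathcal{K}$ (Dunford--Pettis), and it is nonempty by the assumed Slater point. The objective is the sum of a linear term and $I_R$ plus affine terms in $\bm{\eta}$, so it is weakly lower semicontinuous by \Cref{lem:lsc-bregman} and convexity. The direct method therefore gives a minimizer.

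\textbf{Characterization via a multiplier.} Since the Slater point exists and the problem is convex with finitely many linear inequality constraints, standard Lagrangian duality (for instance the Rockafellar--Fenchel theorem with finite-dimensional constraints) provides $\bm{\mu}\in\mathbb{R}^r_{\geq 0}$. With this $\bm{\mu}$, every constrained minimizer also minimizes the Lagrangian
\begin{equation*}
  \int_\Omega \big(\alpha\nabla F(\bm{v})+W^\top\bm{\mu}\big)\cdot\bm{\eta}+\mathcal{D}_R(\bm{\eta},\bm{v})\dd x,
\end{equation*}
and complementary slackness holds. We will justify the existence of $\bm{\mu}$ through the value function $\phi(\bm{c})=\inf\{\ldots:\int_\Omega W\bm{\eta}\dd x\preccurlyeq\bm{b}+\bm{c}\}$. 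This function is convex, and it is finite near $\bm{c}=\bm{0}$ by the Slater condition, so it has a subgradient at $\bm{0}$, and $-\bm{\mu}$ is taken from that subgradient.

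\textbf{Interior property and uniqueness.} The Lagrangian has exactly the form of \Cref{thm:md-interior}, with $\alpha\nabla F(\bm{v})$ replaced by the $L^\infty$ function $\alpha\nabla F(\bm{v})+W^\top\bm{\mu}$. Rerunning the pointwise argument from Part 2 of that proof shows that every constrained minimizer satisfies, almost everywhere,
\begin{equation*}
  \nabla R(\bm{\eta}^\star)=\nabla R(\bm{v})-\alpha\nabla F(\bm{v})-W^\top\bm{\mu}.
\end{equation*}
The right-hand side lies in $L^\infty$, so $\bm{\eta}^\star=\nabla R^*(\bm{\psi})$ with $\bm{\psi}\in L^\infty$, and \Cref{lem:parametrization} gives $\bm{\eta}^\star\in\mathring{\mathcal{K}}$. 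In particular, all minimizers lie in $\mathring{\mathcal{K}}$. Strict convexity of $I_R$ there (\Cref{lem:lsc-bregman}) combined with convexity of the feasible set then yields uniqueness: the midpoint of two distinct minimizers would give a strictly smaller value.

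\textbf{$L^\infty$ statement.} Since $L^\infty\subset L^1$ on bounded $\Omega$ and $\bm{\eta}^\star\in L^\infty$, the $L^1$ minimizer is feasible for the $L^\infty$ problem and also minimizes it there.

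\textbf{Main obstacle.} The hardest step is obtaining the multiplier $\bm{\mu}$ in infinite dimensions. Because the constraints are finitely many, the value-function argument on $\mathbb{R}^r$ should suffice. A further subtlety is that $\bm{\mu}$ must be independent of which minimizer is chosen; this holds because it comes from the dual problem.
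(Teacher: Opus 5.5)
Your proposal is correct and follows essentially the same route as the paper: existence by the direct method on $\mathcal{K}\cap\mathcal{M}$, a Slater-based Lagrange multiplier $\bm{\mu}\succcurlyeq\bm{0}$, and the pointwise optimality condition $\nabla R(\bm{\eta}^\star)=\nabla R(\bm{v})-\alpha\nabla F(\bm{v})-W^\top\bm{\mu}$, which gives the $L^\infty$ bound and, via \Cref{lem:parametrization}, membership in $\mathring{\mathcal{K}}$. Where you differ is only in rigor: you construct $\bm{\mu}$ explicitly from a subgradient of the value function on $\mathbb{R}^r$, and you show every minimizer is interior before invoking strict convexity. The paper instead cites Fenchel--Rockafellar duality and obtains uniqueness ``similarly to \Cref{thm:md-interior}.''
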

\begin{proof}
  The existence and uniqueness of the minimizer $\bm{\eta}^\star \in \mathcal{K}\cap \mathcal{M}$ can be shown similarly to \Cref{thm:md-interior} by observing that $\mathcal{K}\cap \mathcal{M}$ is nonempty, closed, convex, and uniformly integrable.

  The Bregman divergence is strictly convex and lower-semicontinuous with respect to the first argument, and the global constraint is linear.
  Also, the existence of a feasible point $\bm{\eta}$ satisfying the strict inequality corresponds to the generalized Slater condition used in infinite-dimensional optimization; see, e.g., \cite[Chap. 3]{Bonnans2000}.
  Then the Fenchel--Rockafellar duality \cite{rockafellar1974conjugate} implies that the strong duality holds.
  Now, consider the following Lagrangian over $L^1(\Omega;\mathbb{R}^n)\times \mathbb{R}^m$:
  \begin{equation*}
    \mathcal{L}(\bm{\eta},\bm{\mu})=\int_\Omega \alpha\nabla F(\bm{v})\cdot\bm{\eta}+\mathcal{D}_R(\bm{\eta},\bm{v})\dd x + \bm{\mu} \cdot \Big(\int_\Omega W\bm{\eta}\dd x - \bm{b}\Big).
  \end{equation*}
  The non-negativity of the Bregman divergence implies that the Lagrangian has a saddle point $(\bm{\eta}^\star,\bm{\mu}^\star)\in (\mathcal{K}\cap\mathcal{M})\times \mathbb{R}^m$ with finite $p^\star=\mathcal{L}(\bm{\eta}^\star,\bm{\mu}^\star)$. The strong duality implies that
  \begin{align*}
    \max_{\bm{\mu}\in \mathbb{R}^m_{\geq 0}}\min_{\bm{\eta}\in L^1(\Omega;\mathbb{R}^n)}\mathcal{L}(\bm{\eta},\bm{\mu})
    =\mathcal{L}(\bm{\eta}^\star,\bm{\mu}^\star).
  \end{align*}

  Similarly to the previous, we can solve the inner-minimization problem point-wise, so that the minimizer $\bm{\eta}_{\bm{\mu}}$ for a given $\bm{\mu} \in \mathbb{R}^m_{\geq 0}$ is given by
  \begin{equation}\label{eq:optim-lagrangian}
    \nabla R(\bm{\eta}_{\bm{\mu}})(x) = \nabla R(\bm{v})(x)-\alpha\nabla F(\bm{v})(x)-W^\top \bm{\mu} ~\text{ f.a.e. }x\in \Omega.
  \end{equation}
  Since $\bm{\eta}_{\bm{\mu}^\star}=\bm{\eta}^\star$, we have $\nabla R(\bm{\eta}^\star)\in L^\infty(\Omega;\mathbb{R}^n)$ by taking $\bm{\eta}_{\bm{\mu}}=\bm{\eta}^\star$ and $\bm{\mu}=\bm{\mu}^\star$.
  Finally, the boundedness of $P$ implies that $\bm{\eta}^\star\in L^\infty(\Omega;\mathbb{R}^n)$. Thus, $\bm{\eta}^\star$ is also the unique minimizer of the problem when embedded in the $L^\infty$-topology.
\end{proof}

\begin{remark}[Pure Phases]
  Note that the range of the gradient mapping is open: $\nabla R^*(\mathbb{R}^n) = \operatorname{int}(P)$.
  Theoretically, this implies that a pure phase $\bm{v}_i$ is only attainable in the limit as $|\bm{\psi}| \to \infty$.
  However, the specific structure of $\nabla R^*$ (involving exponential functions like softmax) ensures that $\operatorname{dist}(\nabla R^*(\bm{\psi}), \partial P)$ decays exponentially with respect to the magnitude of $\bm{\psi}$.
  Numerically, this saturation is rapid; a latent variable $\bm{\psi}$ with magnitude of order $\mathcal{O}(10)$ is typically sufficient to approximate a pure-phase solution to machine precision.
\end{remark}
\begin{remark}[Global Convex Constraint]
  Although we have only considered the linear global constraint $\int_\Omega W\bm{\eta}\dd x\preccurlyeq \bm{b}$, this framework can be generalized to accommodate convex global constraints of the form $\int_\Omega g(\bm{\eta})\dd x\preccurlyeq \bm{b}$, where $g:\mathbb{R}^n\to\mathbb{R}^m$ is a lower semicontinuous convex function.
\end{remark}

\section{Practical Considerations}
In this section, we address key practical considerations for the numerical implementation of the proposed optimization framework. First, we detail the mathematical treatment for the rotation of anisotropic materials, incorporating the polytopal material design domain $P$. Subsequently, we outline the computational procedures for efficiently evaluating the Bregman projection.
\subsection{Rotation of Anisotropic Materials}\label{subsec:aniso}
We consider a composite design domain consisting of two phases: an isotropic phase and an anisotropic phase.
Let $\mathsf{C}_{iso}$ and $\mathsf{C}_{ani}$ denote their respective material tensors.
The anisotropic tensor is permitted to rotate at ${\texttt{nv}}-1$ discrete angles $\{\theta_i\}_{i=1}^{{\texttt{nv}}-1}$, uniformly distributed in $[0,2\pi)$:
$$\theta_i = \frac{2\pi(i-1)}{{\texttt{nv}}-1}, \quad i=1,\ldots,{\texttt{nv}}-1.$$
To model this, we construct a polytope $P$ defined by a regular $({\texttt{nv}}-1)$-gon base centered at the origin and an apex at $(0,0,1)$.
The $i$-th base vertex is placed at
$$\bm{v}_i = \big(\cos\theta_i,\,\sin\theta_i,\,0\big),$$
so that the vertices are uniformly distributed on the unit circle in the base plane.
The vertices of the base represent the rotated anisotropic phases, while the apex represents the isotropic phase.
Then the design variable $\bm{\eta}(x)=(a(x),b(x),s(x))\in P$ at a point $x\in \Omega$ encodes the material phase and orientation at that point.
The effective material tensor is defined by the SIMP-like interpolation:
$$\mathsf{C}(\bm{\eta})=\mathsf{C}(a,b,s) := s^p \mathsf{C}_{iso} + r^p \mathsf{C}_{ani}(\phi) = s^p\mathsf{C}_{iso}+r^p\big[ \mathbf{R}(\phi)\, \mathsf{C}_{ani}\, \mathbf{R}(-\phi) \big],$$
where $(r,\phi)$ are the polar coordinates of the planar design variable $(a,b)$, and we omit the explicit dependence on $x$ for brevity.
This is equivalent to: (1) rotate the physical quantity of interest (strain, current, gradient) to the reference material coordinate, (2) apply the material tensor, (3) rotate back to the physical coordinate.
The penalization power $p$ should be chosen properly to ensure differentiability along the $s$-axis (isotropic phase).
For instance, $p=2$ is sufficient for the second-order tensor interpolation in diffusion, and $p=4$ is sufficient for the fourth-order tensor interpolation in elasticity.
Within the $({\texttt{nv}}-1)$-gonal base, $r=1$ holds only at the vertices (pure anisotropic phases).
Consequently, the combination of this interpolation and a global mass constraint effectively penalizes intermediate design variables $(a,b,s)$ that do not coincide with the vertices of $P$.
In three-dimensional cases, the base can be parametrized by (scaled-)quaternions to represent the full (scaled-)rotation group $SO(3)$ with three degrees of freedom, and the interpolation can be defined similarly by replacing $\mathbf{R}(\phi)$ with the corresponding rotation matrix.

\paragraph{Exploiting periodicity}
For many physical tensors, such as elasticity and diffusion tensors, the material property is invariant under a $\pi$-rotation:
$$\mathsf{C}_{ani}(\theta) = \mathsf{C}_{ani}(\theta+\pi).$$
In this case, distinct orientations need only be sampled over the half-period $[0,\pi)$, so the ${\texttt{nv}}-1$ discrete angles are taken as
$$\theta_i = \frac{\pi(i-1)}{{\texttt{nv}}-1}, \quad i=1,\ldots,{\texttt{nv}}-1.$$
To encode these angles without breaking the symmetry of the regular $({\texttt{nv}}-1)$-gon, we use a \emph{double-angle} encoding and place the $i$-th base vertex at
$$\bm{v}_i = \big(\cos(2\theta_i),\,\sin(2\theta_i),\,0\bigr).$$
Since $2\theta_i$ ranges over $[0,2\pi)$, the vertices still form a regular $({\texttt{nv}}-1)$-gon — the polytope geometry is unchanged.
The material orientation is then recovered from the polar angle $\phi$ via $\theta = \phi/2$, and the interpolation becomes
\begin{equation}\label{eq:C-periodic}
  \mathsf{C}(\bm{\eta})=\mathsf{C}(a,b,s):= s^p\mathsf{C}_{iso}+r^p\big[ \mathbf{R}(\phi/2)\, \mathsf{C}_{ani}\, \mathbf{R}(-\phi/2) \big].
\end{equation}
This encoding intrinsically avoids redundancy: as $\phi$ traverses $[0,2\pi)$, the material orientation $\theta = \phi/2$ traverses $[0,\pi)$ exactly once, yielding a bijection between base vertices and distinct material phases, while preserving the proximity between physical angles.

\subsection{Bregman Projection}\label{subsec:projection}
The projection step, $\bm{\eta}^{k+1}=\textrm{Proj}_R(\bm{\eta}^{k+1/2})$ (cf. \eqref{eq:update_with_proj}), is equivalent to solving the finite-dimensional dual problem:
\begin{equation}\label{eq:dual-problem}
  \max_{\bm{\mu}\succcurlyeq\bm{0}}\Big\{g(\bm{\mu})=-\int_\Omega R^*(\bm{\psi}^{k+1/2}-W^\top \bm{\mu})\dd x-\bm{\mu}^\top \bm{b}\Big\}.
\end{equation}
For the specific case of a single global constraint (e.g., total mass), this reduces to a one-dimensional root-finding problem, for which we employ the Illinois algorithm as described in \cite{simpl-engrg}.

As the intermediate design variable $\bm{\eta}^{k+1/2}$ converges toward pure phases, the optimization landscape becomes increasingly ill-conditioned.
While a standard projected Newton's method suffices when the transition region (or ``gray region'') is sufficiently wide, it becomes unstable as the solution sharpens and the Hessian degenerates.
Although the Bregman--Dykstra projection on convex sets \cite{Bauschke2000} is theoretically guaranteed to converge, we observe that its convergence rate significantly degrades in this regime.
Developing a theoretically robust and efficient numerical method for this regime remains a subject for future work.

\section{Numerical Experiments}
In this section, we apply \Cref{alg:pmd-gbb} to multi-material topology optimization problems.
In the following, we use the adaptive step size by the generalized Barzilai–Borwein method (GBB) following \cite{simpl-engrg}:
\begin{equation}\label{eq:gbb}
  \alpha_k := \left|\frac{\int_\Omega (\bm{\eta}^k-\bm{\eta}^{k-1})\cdot(\bm{\psi}^k-\bm{\psi}^{k-1})\dd x}{\int_\Omega (\bm{\eta}^k-\bm{\eta}^{k-1})\cdot\Big(\nabla F(\bm{\eta}^k)-\nabla F(\bm{\eta}^{k-1})\Big)\dd x}\right|.
\end{equation}
Additionally, we use the generalized Armijo condition
\begin{equation}\label{eq:armijo}
  F(\bm{\eta}^{k+1})\leq F(\bm{\eta}^k)+c_1\int_\Omega \nabla F(\bm{\eta}^k)\cdot(\bm{\eta}^{k+1}-\bm{\eta}^k)\dd x.
\end{equation}
Here, we choose $c_1=10^{-4}$. If we assume \Cref{asum:grad} with $L$-Lipschitz continuity of $\nabla F$, then the backtracking line search terminates in a finite number of iterations with monotonically decreasing objective values.
For the Bregman projection step, we used the Dykstra projection algorithm \cite{Bauschke2000}.
The numerical experiments were implemented using the finite element libraries MFEM \cite{andrej2024mfem,tzanio2010mfem} and NGSolve \cite{Schoberl2014ngsolve}.

\begin{remark}[Stopping Criterion]
  By construction, the iterates satisfy primal feasibility $\bm{\eta}^k\in\mathcal{A}=\mathcal{K}\cap\mathcal{M}$ for all $k$. Furthermore, the Bregman projection step implicitly generates a dual-feasible Lagrange multiplier $\bm{\mu}^k$ (cf. \eqref{eq:optim-lagrangian}). Consequently, convergence verification reduces to checking stationarity and complementary slackness.
  To this end, we define the scaled successive latent update $\bm{d}^{k}$ as a proxy for the total gradient:
  \begin{equation*}
    \bm{d}^{k}:=\frac{\bm{\psi}^k-\bm{\psi}^{k+1}}{\alpha_k}=\nabla F(\bm{\eta}^k)+\frac{1}{\alpha_k}W^\top \bm{\mu}^k.
  \end{equation*}
  Recall that for a stationary point $\bm{\eta}^\star\in\mathcal{A}$ with multiplier $\bm{\mu}^\star\in\mathbb{R}^r_{\geq 0}$, the following variational inequality holds:
  \begin{equation}
    \int_\Omega (\nabla F(\bm{\eta}^\star)+W^\top \bm{\mu}^\star)\cdot(\bm{v}-\bm{\eta}^\star)\dd x \geq 0 \quad \forall \bm{v}\in \mathcal{K}.
  \end{equation}
  Exploiting the pointwise convexity of $\mathcal{K}$ and the linearity of the inner product, the minimum of the variational term over $\mathcal{K}$ is attained at the vertices. We can thus quantify the violation of stationarity at iterate $k$ by the gap function:
  \begin{equation*}
    g(\bm{\eta}^k, \bm{d}^{k}) := \int_\Omega \min_{\bm{v}\in \operatorname{vert}(P)} \left\{ \bm{d}^{k} \cdot (\bm{v}-\bm{\eta}^k) \right\} \dd x.
  \end{equation*}
  Note that this integral is always non-positive. We explicitly define the residual for the stopping criterion as the magnitude of this gap:
  \begin{equation}\label{eq:KKT_estimator}
    \texttt{res}_{k} := \int_\Omega \left| \min_{\bm{v}\in \operatorname{vert}(P)} \left\{ \bm{d}^{k} \cdot (\bm{v}-\bm{\eta}^k) \right\} \right| \dd x.
  \end{equation}
  The algorithm terminates when $\texttt{res}_{k}$ or $\texttt{res}_{k}/\texttt{res}_{0}$ fall below a specified tolerance.
\end{remark}

\subsection{Compliance Minimization with Isotropic Materials}\label{subsec:num-iso}
\begin{figure}
  \centering
  \begin{tabular}{cc}
    \includegraphics[width=0.5\textwidth, trim=0 -150pt 0 0]{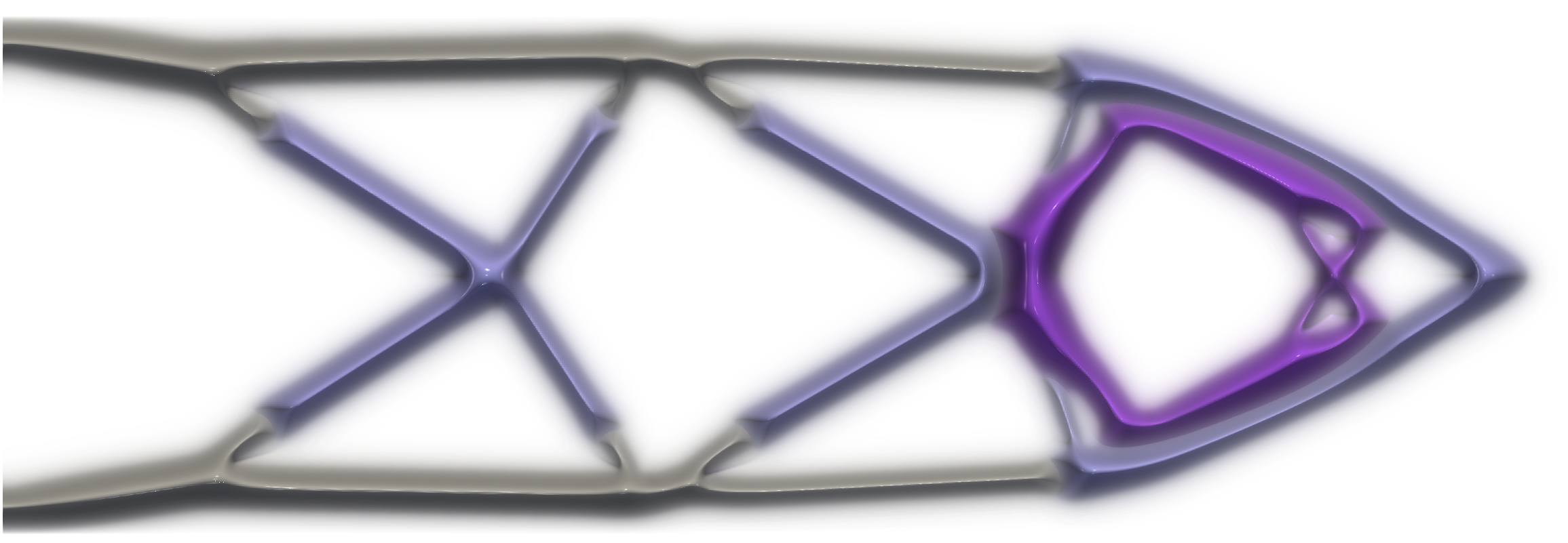} &
    \vspace{0.3em}\includegraphics[width=0.35\textwidth]{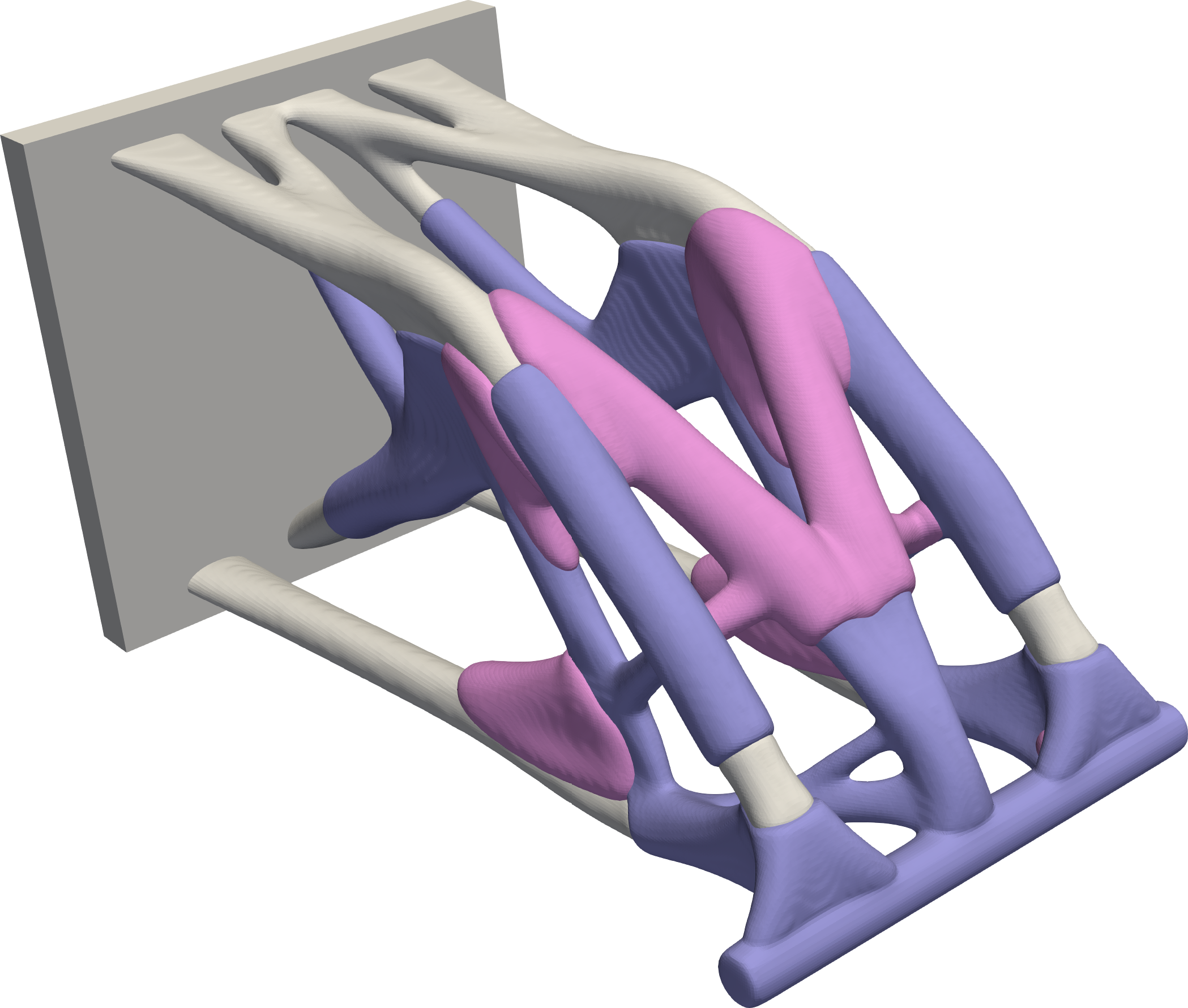}          \\
    (a)                                                                                  & (b)
  \end{tabular}
  \caption{Optimized designs for compliance minimization using isotropic materials with Young's moduli $E_1=10^{-6}$, $E_2=1$ (pink), $E_3=3$ (purple), $E_4=5$ (grey), and Poisson's ratio $\nu=0.3$. (a) 2D example after 31 iterations, (b) 3D example after 47 iterations.\label{fig:compliance-min-design}}
\end{figure}
In this example, we consider the compliance minimization problem over the domain $\Omega=(0,3)\times(0,1)$:
\begin{subequations}\label{eq:comp-min}
  \begin{align}
    \min_{\bm{\eta}\in L^\infty(\Omega;\mathbb{R}^n)} &
    \int_\Omega \bm{f}\cdot\bm{u}(\bm{\eta})\dd x                                                                                                                                                                                                                      \\
    \subto
                                                      &
    \int_\Omega \epsilon^2\nabla \tilde{\bm{\eta}}:\nabla \tilde{\bm{q}}+\tilde{\bm{\eta}}\cdot\tilde{\bm{q}}\dd x=\int_\Omega \bm{\eta}\cdot\tilde{\bm{q}}\dd x\quad\forall \bm{q}\in H^1_F(\Omega), \label{eq:pde_filter}                                            \\
                                                      & \int_\Omega \mathsf{C}\big(\tilde{\bm{\eta}}\big):\varepsilon\big(\bm{u}(\tilde{\bm{\eta}})\big):\varepsilon(\bm{v})\dd x=\int_\Omega \bm{f}\cdot\bm{v}\dd x\quad\forall \bm{v}\in H^1_D(\Omega;\mathbb{R}^2), \\
                                                      & \bm{\eta}(x)\in \Delta^{n-1}\text{ a.e. }x\in\Omega,                                                                                                                                                           \\
                                                      & \int_\Omega \eta_i\dd x\leq 0.18\quad\text{for }i=2                                                                                                                                                            \\
                                                      & \int_\Omega \eta_i\dd x\leq 0.36\quad\text{for }i=3,4.
  \end{align}
\end{subequations}
Here,\begin{equation*}
  \begin{aligned}
    \bm{f}(x)                  & :=\begin{cases}
                                     (0,-1) & \text{ when }|x-(2.9,0.5)|\leq 0.05, \\
                                     \bm{0} & \text{ otherwise.}
                                   \end{cases},                                                                            \\
    H^1_F(\Omega;\mathbb{R}^n) & :=\{\tilde{\bm{\eta}}\in H^1(\Omega;\mathbb{R}^n):\tilde{\bm{\eta}}|_{\Gamma_F}=\bm{0}\},\quad\Gamma_F=(0,3)\times\{0,1\}, \\
    H^1_D(\Omega;\mathbb{R}^2) & :=\{\bm{u}\in H^1(\Omega;\mathbb{R}^2):\bm{u}|_{\Gamma_D}=\bm{0}\},\quad\Gamma_D=\{0\}\times(0,1),
  \end{aligned}
\end{equation*}
$\epsilon=0.06/2\sqrt{3}$, and $\mathsf{C}(\tilde{\eta})$ is the effective elasticity tensor defined by the Poisson ratio $\nu$ and the effective Young's modulus:
\begin{equation*}
  E_{\text{eff}}^{(j)}=E_{\text{eff}}^{(j-1)}\Big[1-\Big(\frac{\tilde{\eta}_j}{S_j}\Big)^p\Big]+E_j\Big(\frac{\tilde{\eta}_j}{S_j}\Big)^p\text{ a.e. }x\in\Omega,\;j=2,\ldots,n,
\end{equation*}
where $S_j=\sum_{i=1}^j\tilde{\eta}_i$ is the cumulative density, $p=3$ is the SIMP penalty parameter, and $E_j$ is Young's modulus of the $j$-th material phase.
We choose the Poisson ratio $\nu=0.3$ for all materials, $E_1=10^{-6}$ for the void material, and set $E_2=1$, $E_3=3$, and $E_4=5$ to represent the weak, normal, and strong materials, respectively.
The volume constraints are specifically partitioned so that the weakest material ($i=2$) is allocated exactly half the volume of the more robust materials ($i=3, 4$), ensuring their collective contribution satisfies the prescribed $30\%$ total volume threshold.
For the numerical discretization, we employ a uniform rectangular mesh $\mathcal{T}_h$ with a side length of $h=2^{-8}$. The indicator space is discretized using piecewise constant elements, while both the filtered indicator space and the displacement space are approximated using continuous piecewise bilinear elements.
The optimal design for each interpolation scheme is depicted in \Cref{fig:compliance-min-design}, and the histories of the objective function $F(\bm{\eta}^k)$, the gap residual $\texttt{res}_k$, and the step size $\alpha_k$ are shown in \Cref{fig:compliance-min-history}.
\begin{figure}
  \centering
  \begin{tabular}{ccc}
    \resizebox{0.3\textwidth}{!}{
    \begin{tikzpicture}[font=\Huge]
        \begin{axis}[
            axis y line* = left,
            width = 12cm,
            view = {45}{65},
            xlabel={Iteration $k$},
            ylabel={Compliance $F(\bm{\eta}^k)$},
            label style={font=\Huge},
            xmajorgrids, ymajorgrids,
            xtick = {0, 10, 20, 30, 40, 50},
            ymode = log,
            legend style={at={(0.98,0.98)}, anchor=north east, inner sep=6pt, font=\huge, fill=white, fill opacity=0.8, draw opacity=1, text opacity=1},
          ]

          \addplot[color1, line width=1.5mm] table [x expr=\coordindex+1, y index={1}, col sep=comma] {plots/result_intp1_r5.csv};
        \end{axis}
      \end{tikzpicture}} &
    \resizebox{0.3\textwidth}{!}{
    \begin{tikzpicture}[font=\Huge]
        \begin{axis}[
            axis y line* = left,
            width = 12cm,
            view = {45}{65},
            xlabel={Iteration $k$},
            ylabel={$\texttt{res}_k$},
            label style={font=\Huge},
            xmajorgrids, ymajorgrids,
            xtick = {0, 10, 20, 30, 40, 50},
            ymode = log,
            legend style={at={(0.98,0.98)}, anchor=north east, inner sep=6pt, font=\huge, fill=white, fill opacity=0.8, draw opacity=1, text opacity=1},
          ]

          \addplot[color1, line width=1.5mm] table [x expr=\coordindex+1, y index={3}, col sep=comma] {plots/result_intp1_r5.csv};
        \end{axis}
      \end{tikzpicture}} &
    \resizebox{0.3\textwidth}{!}{
    \begin{tikzpicture}[font=\Huge]
        \begin{axis}[
            axis y line* = left,
            width = 12cm,
            view = {45}{65},
            xlabel={Iteration $k$},
            ylabel={Step Size $\alpha_k$},
            label style={font=\Huge},
            xmajorgrids, ymajorgrids,
            xtick = {0, 10, 20, 30, 40, 50},
            ytick = {10^-2, 10^0, 10^2, 10^4, 10^6, 10^8},
            ymode = log,
            legend style={at={(0.98,0.98)}, anchor=north east, inner sep=6pt, font=\huge, fill=white, fill opacity=0.8, draw opacity=1, text opacity=1},
          ]

          \addplot[color1, line width=1.5mm] table [x expr=\coordindex+1, y index={2}, col sep=comma] {plots/result_intp1_r5.csv};
        \end{axis}
      \end{tikzpicture}}              \\
    (a)                             & (b) & (c)
  \end{tabular}
  \caption{Convergence history for (a) compliance $F(\bm{\eta}^k)$, (b) gap residual $\texttt{res}_k$, and (c) step size $\alpha_k$ for 2D compliance minimization with isotropic materials.\label{fig:compliance-min-history}}
\end{figure}

Combined with the adaptive step size \eqref{eq:gbb} and the generalized Armijo backtracking linesearch \eqref{eq:armijo}, the optimal 2D design is obtained after 31 iterations with the final relative gap residual $\texttt{res}_k/\texttt{res}_0\leq 10^{-4}$.
We note that the Armijo condition passed without failure for most of the optimization process with at most 3 backtracking steps.
This indicates that the GBB step size \eqref{eq:armijo} is well-suited for this problem.
The step size $\alpha_k$ grows almost exponentially, while achieving a monotonic decrease in the objective function.
We note that the Armijo condition does not guarantee a monotonic decrease of the gap residual $\texttt{res}_k$.

We also apply \Cref{alg:pmd-gbb} to a three-dimensional analogue of the problem above, posed on the cantilever domain $\Omega=(0,2)\times(0,1)\times(0,1)$ with the same materials, SIMP penalty $p=3$. The clamped face is $\Gamma_D=\{0\}\times(0,1)\times(0,1)$, and the filter Dirichlet boundary is taken to be the lateral surface $\Gamma_F=(0,2)\times\partial\big((0,1)\times(0,1)\big)$. The body force is concentrated in a cylindrical region of radius $0.05$ oriented along the $x_2$-axis near the tip:
\begin{equation*}
  \bm{f}(x):=\begin{cases}
    (0,0,-1) & \text{ when }(x_1-1.9)^2+(x_3-0.1)^2\leq 0.05^2, \\
    \bm{0}   & \text{ otherwise.}
  \end{cases}
\end{equation*}
In addition, we fixed the design variable $\eta_2$ at the loading region to ensure the presence of the intermediate material phase. For the volume constraints, we enforce $\int_\Omega \eta_{i+1}\dd x \leq b_i$ with $b_1=0.02|\Omega|$, $b_2=b_3=0.04|\Omega|$.
We discretize $\Omega$ with a uniform hexahedral mesh of side length $h=2^{-7}$, using piecewise constant elements for the indicator space and continuous piecewise trilinear elements for the filtered indicator and displacement spaces. Since the solution is expected to be symmetric with respect to the plane $x_2=0.5$, we only optimize over the half domain $\Omega \cap \{x_2\leq 0.5\}$, and used symmetry boundary conditions on the plane $x_2=0.5$ for both the filter and the elasticity equations.
The optimized design, obtained after 47 iterations with final gap residual $6.25057\times 10^{-6}$, is shown in \Cref{fig:compliance-min-design}(b).

\subsection{Compliance Minimization with Orthotropic Material}\label{subsec:num-aniso}
The next example demonstrates the usage of polytopal constraints for compliance minimization with orthotropic material. We consider a single orthotropic material phase with discrete orientations, in addition to the void phase.
The orthotropic material is defined by Young's moduli $E_x=5.0,\;E_y=0.5$ and Poisson's ratio $\nu_{xy}=0.3$ along the reference $x$- and $y$-axes, respectively.
The shear modulus is determined by $G_{xy} = \frac{\sqrt{E_x E_y}}{2(1+\sqrt{\nu_{xy}\nu_{yx}})}$, where $\nu_{yx} = \nu_{xy} E_y / E_x$ follows from the symmetry of the compliance matrix
With 16 discrete angles, $\theta_i=\frac{i\pi}{8}$ for $i=0,...,7$, we consider $P=\operatorname{conv}(\{(0,\cos(2\theta_i),\sin(2\theta_i))\}_{i=0}^{7}\cup \{(1,0,0)\})$.
The design variable $\bm{\eta}(x)=(\eta_1,\eta_2,\eta_3)^\top=(\eta_{void},r\cos(2\theta), r\sin(2\theta))^\top\in \mathbb{R}^3$ is defined such that $\eta_1$ represents the void phase, while $\eta_2$ and $\eta_3$ encode the orientation of the orthotropic material.
For the volume constraint, we enforce minimum void volume, $\int_\Omega \eta_1\dd x \geq 0.3|\Omega|$.
We used the same domain, boundary conditions, loading, and discretization as in the previous example.
With initial latent variable $\bm{\psi}=(0,0.1,0)^\top$, the optimal design is obtained after 38 iterations with $\texttt{res}_k/\texttt{res}_0\leq 10^{-12}$.
Here, we used a smaller stopping tolerance as the initial design is mostly void, leading to massive gradient magnitudes at the beginning of the optimization process. The final absolute gap residual is $\texttt{res}_k=4.8\times 10^{-6}$ with compliance $F(\bm{\eta}^k)=0.00277658$ after 35 iterations.
The final design is depicted in \Cref{fig:aniso}, where the colored lines indicate the material $x$-coordinates.
We observe alignment between the anisotropic material and the bulk structural features, consistent with the literature \cite{Stegmann2005,Hvejsel2011}.
\begin{figure}
  \centering
  \begin{tabular}{cc}
    \begin{overpic}[width=0.57\textwidth]{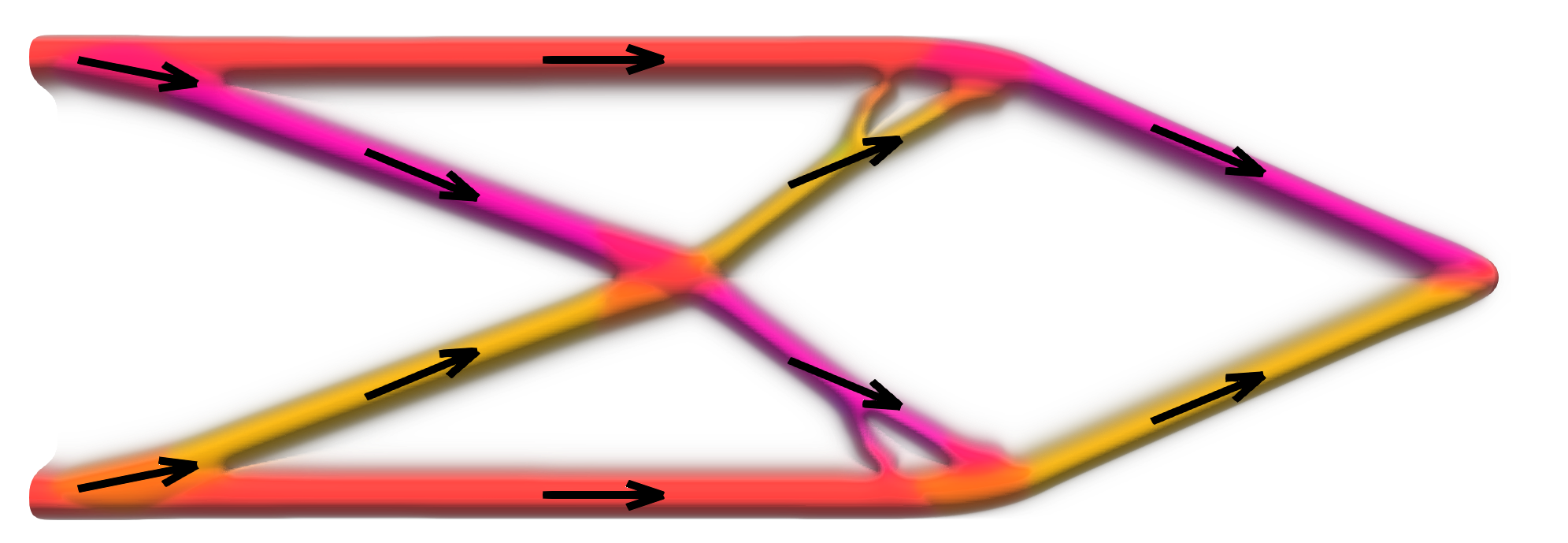}
    \end{overpic}
        &
    \hspace{3em}\includegraphics[width=0.22\textwidth]{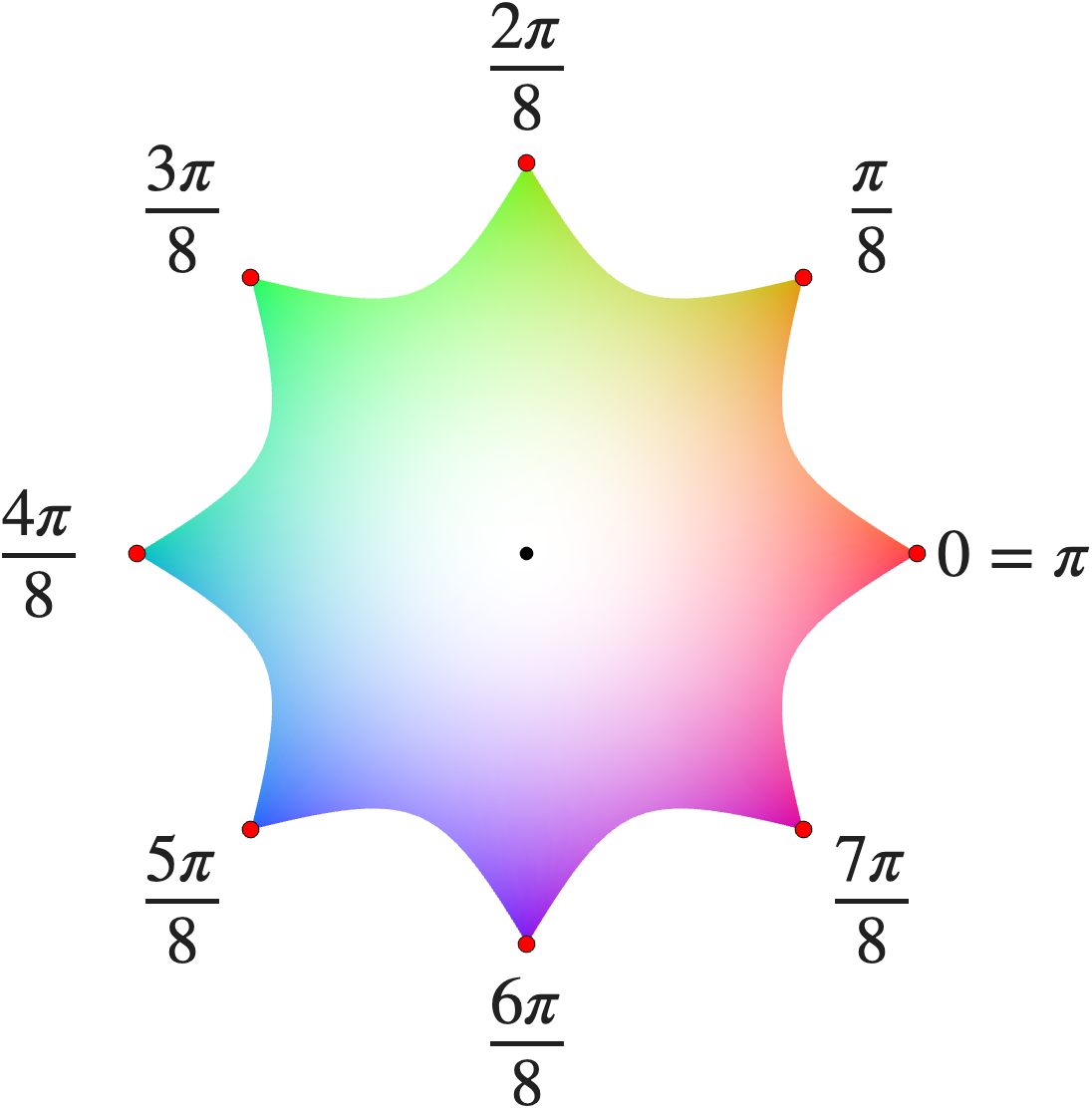} \\
    (a) & \hspace{3em}(b)
  \end{tabular}
  \caption{(a) Optimized design for compliance minimization using orthotropic material with 8 discrete angles, Young's moduli $E_x=5.0$, $E_y=0.5$, and Poisson's ratio $\nu_{xy}=0.3$. The optimal design is obtained after 35 iterations with $F(\bm{\eta})=0.00277658$. (b) Angular color map scaled by the effective Young's moduli with $p=4$, cf. \eqref{eq:C-periodic}.\label{fig:aniso}
  }
\end{figure}

\subsection{Maximizing magnetic flux in electric motor}
As a final example, we consider the optimization of a two-dimensional cross-section of an electric motor.
We aim at finding an optimal distribution of ferromagnetic material and permanent magnets in order to maximize the magnetic flux passing from the rotor (inner part) to the stator (outer part). Since permanent magnets have a high cost, their total volume typically needs to be constrained. The computational domain is depicted in Figure \ref{fig:motor}(a) with the design region $\Omega_d$ highlighted. For more details on this particular application, we refer the reader to \cite{CherriereGanglKrenn2025}. As in \cite{CherriereGanglKrenn2025}, the effect of the stator geometry is neglected, and it is modeled as pure air.

Permanent magnets are characterized by their magnetization direction. Since, from a fabrication point of view, continuous magnetization fields are not feasible, we aim to find material distributions with piecewise-constant magnetization directions. More precisely, we allow for ferromagnetic material and $12$ magnetization directions $\varphi_i = \frac{\pi}{6} (i-1)$, $i=1, \dots, 12$, giving a total of 13 candidate materials. Instead of placing these materials in a high-dimensional simplex, we incorporate our domain knowledge and place the magnet materials on a two-dimensional polygon; together with ferromagnetic material placed on top, we get a polytope $P$ with vertices $(\mathrm{cos}\varphi_i, \mathrm{sin}\varphi_i, 0)$ for $i=1,\dots, 12$ and $(0,0,1)$, see Figure \ref{fig:motor}(b).

The problem at hand reads as follows:
\begin{align*}
           & \mathrm{max } \int_\Gamma \mathrm{curl}u \cdot \bm{n} \; \mbox{d}s \qquad \mbox{subject to } \bm{\eta} \in \mathcal K \cap \mathcal M \mbox{ and }                                                              \\
  u \in V: & \int_\Omega \nu(\bm{\eta}, |\mathrm{curl}u|) \mathrm{curl}u \cdot \mathrm{curl}v \; \mbox dx = \int_{\Omega_c} j v \; \mbox{d}x + \int_\Omega M(\bm{\eta}) \cdot \mathrm{curl} v \; \mbox{d}x\; \forall v \in V
\end{align*}
where $u$ denotes the third component of the magnetic vector potential for the magnetic flux density $\mathbf B$ such that $\mathbf B = \mathrm{Curl}((0,0,u)^\top)$, and $\mathrm{curl}(u) = (\partial_2 u, -\partial_1 u)^\top$ denotes the two-dimensional scalar-to-vector $\mathrm{curl}$ operator.
The curve $\Gamma$ lies in the air gap between the inner part (rotor) and the outer part (stator) of the motor, and $\bm{n}$ denotes the outer unit normal vector to $\Gamma$ pointing out of the rotor. The function space $V := \{v \in H^1(\Omega): v = 0 \mbox{ on } \Gamma_{\textrm{in}} \cup \Gamma_{\textrm{out}}, v|_{\Gamma_{\ell}} = - v|_{\Gamma_\textrm{r}} \}$ denotes the space of all $H^1$ functions that vanish on the inner and outer boundaries and that are antiperiodic between $\Gamma_{\ell}$ and $\Gamma_\textrm{r}$. The magnetic reluctivity $\nu$ is interpolated between a constant value $\nu_{\textrm{mag}} = \nu_0/1.03$ for all magnets where $\nu_0 = 10^7 / (4 \pi)$ is the vacuum reluctivity and a nonlinear function $\hat \nu(s)$ obtained from measurement data \cite{Pechstein2006}.
The source current density $j$ is set to a constant of $10$ A/mm$^2$ in $\Omega_{c}^+$ and $-10$ A/mm$^2$ in $\Omega_{c}^-$, see Fig. \ref{fig:motor}(a).
For the interpolation we used
\begin{align*}
  \nu(\bm \eta, s) = \nu_{\textrm{mag}} + \eta_3^{p_{\nu}} (\hat \nu(s) - \nu_{\textrm{mag}}) \quad \mbox{and}\quad
  M(\bm \eta) = \nu_{\textrm{mag}} \sqrt{\eta_1^2+\eta_2^2}^{p_{\textrm{mag}}-1} (\eta_1, \eta_2)^\top
\end{align*}
with $p_\nu=3$ and $p_{\textrm{mag}}=2$.
Finally, $\mathcal K$ is given in \eqref{eq:admin_a} with $P$ as depicted in Figure \ref{fig:motor}(b) and $\mathcal M$ is given in \eqref{eq:admin_b} with $W=[ 0 \; 0\; -1]$ and $b=-0.8 |\Omega_d|$, thus yielding an upper bound of the magnet volume ratio of 20\%.

We discretized the computational domain depicted in Fig. \ref{fig:motor}(a) by a triangular mesh consisting of 18232 vertices and 36151 triangles with a higher resolution in the design domain. We used the standard PDE-based density filter in \eqref{eq:pde_filter} and a bisection method to enforce the volume constraint at each iteration. We again stopped the algorithm when the residual \eqref{eq:KKT_estimator} was reduced by a factor of $10^{-4}$ yielding an absolute residual $\texttt{res}_k= 9.36\times 10^{-8}$. The result of the optimization obtained after 30 iterations is depicted in Figure \ref{fig:motor}(c). The objective value was improved from around $5.35\times 10^{-5}$ for the initial design represented by the latent variable $\bm \psi^0 \equiv (0,1,5)^\top$ to about $1.018\cdot 10^{-2}$ for the optimized design.

\begin{figure}
  \centering
  \begin{tabular}{ccc}
    \includegraphics[width=0.23\linewidth]{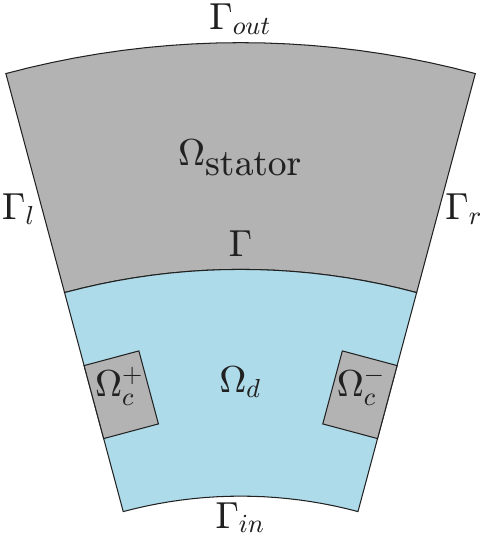}\;\;    &
    \hspace{-5mm}\includegraphics[width=0.35\linewidth]{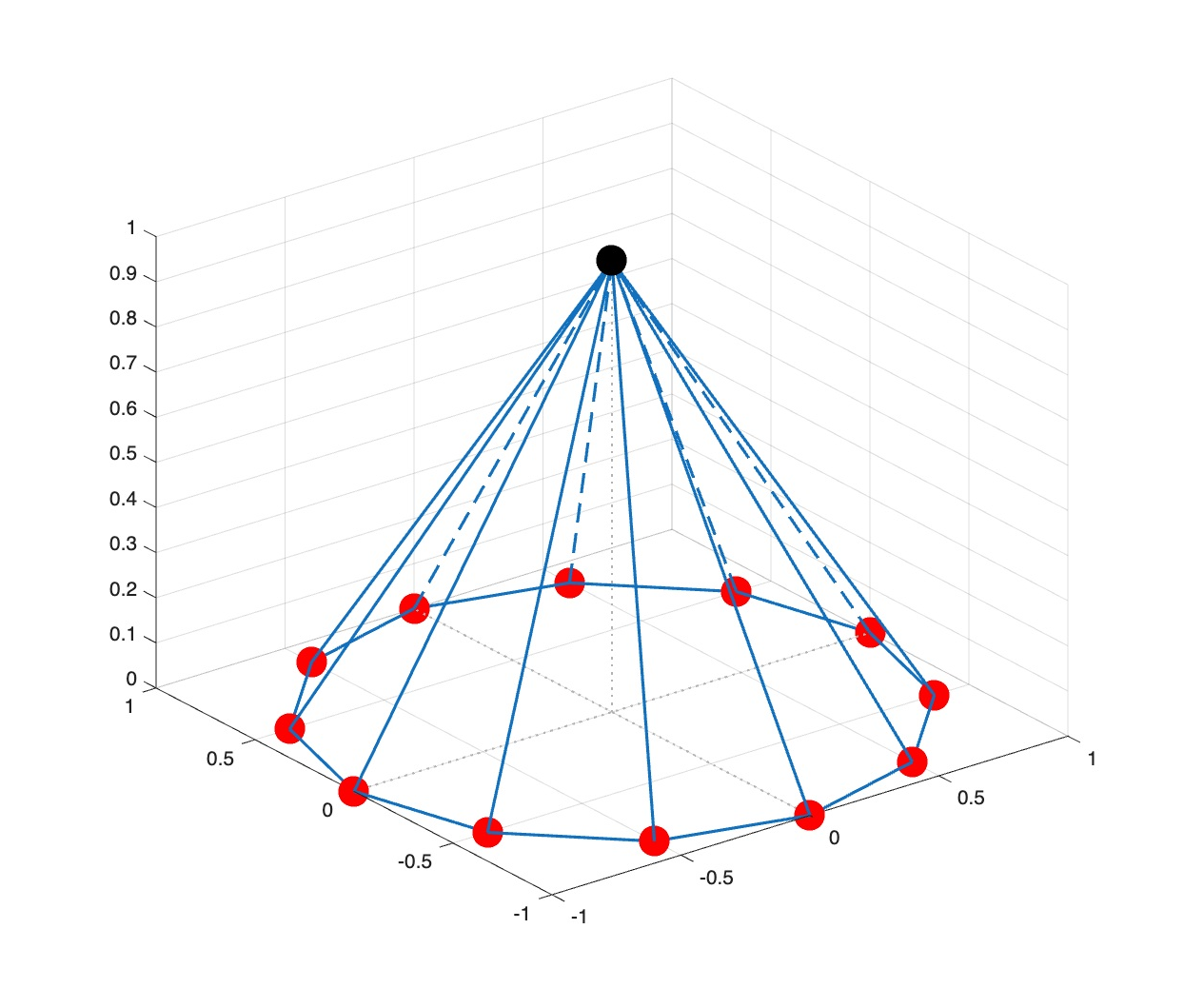} &
    \hspace{-5mm}\includegraphics[width=0.35\linewidth]{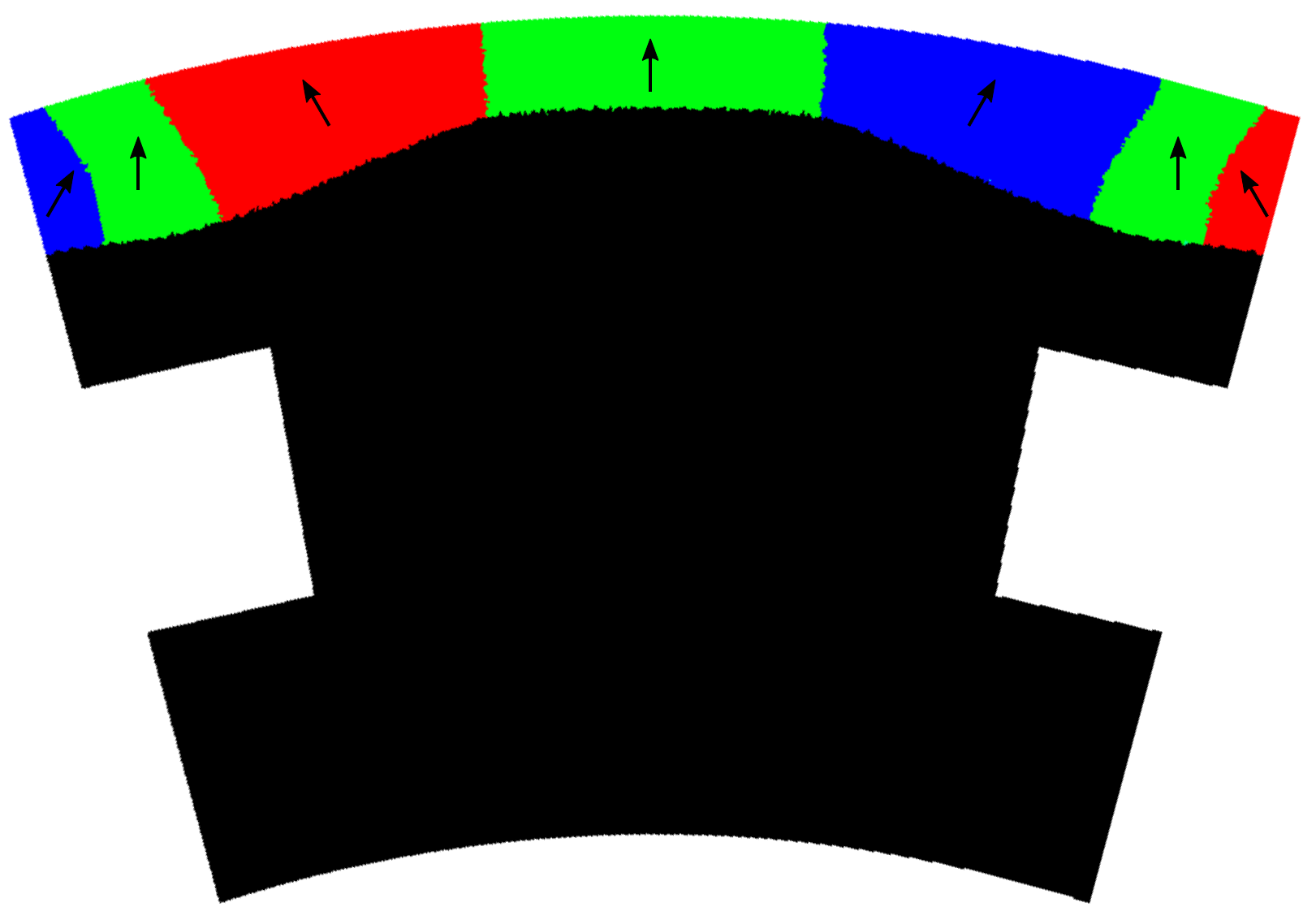}   \\
    (a)                                                                   & (b) & (c)
  \end{tabular}
  \caption{Optimization of electric motor: (a) Cross-section of the sector of an electric motor with the design region $\Omega_d$ highlighted in blue; (b) Convex polytope $P$ for 12 magnetization directions (red) and ferromagnetic material (black); (c) Optimized design consisting of ferromagnetic material (black) and magnetization directions 60$^\circ$ (blue), 90$^\circ$ (green), 120$^\circ$ (red).\label{fig:motor}}
\end{figure}

\section{Conclusion}

We have presented a mirror descent method for multi-material topology optimization over general convex polytopal design domains.
The central contribution is a polytopal entropy function $R$, obtained as the infimal projection of the Gibbs entropy via the vertex matrix $V$, whose conjugate yields the smooth map $\nabla R^*(\bm{\psi}) = V\operatorname{softmax}(V^\top\bm{\psi})$ parametrizing $\operatorname{int}(P)$ by an unconstrained latent variable.
The resulting algorithm decouples into an unconstrained gradient step and a low-dimensional dual update, with iterates that remain provably in $\mathring{\mathcal{K}}$.
Numerical experiments across three problem classes--isotropic compliance minimization, orthotropic orientation optimization, and electric motor rotor design--demonstrate rapid convergence with minimal parameter tuning.
The placement of material phases at the vertices of $P$ allows physical prior knowledge to be encoded directly in the parametrization.

\bibliographystyle{siamplain}
\bibliography{ref}
\end{document}